\documentclass[12pt]{amsart}

\usepackage{amssymb,latexsym}
\usepackage{amscd}
\usepackage{verbatim,euscript}
\usepackage{fullpage}

\usepackage[colorlinks=true,linkcolor=blue,urlcolor=violet,citecolor=magenta]{hyperref}

\input {cyracc.def}
\tolerance=4000
\numberwithin{equation}{section}

\font\tencyr=wncyr10 
\font\tencyi=wncyi10 
\font\tencysc=wncysc10 
\def\rus{\tencyr\cyracc}
\def\rusi{\tencyi\cyracc}
\def\rusc{\tencysc\cyracc}


\newtheorem{thm}{Theorem}[section] 

\newtheorem{lm}[thm]{Lemma}
\newtheorem{prop}[thm]{Proposition}

\theoremstyle{remark}
\newtheorem{rmk}[thm]{Remark}

\theoremstyle{definition}

\newtheorem*{ex-bn}{Example}
\newtheorem{df}{Definition}

\newenvironment{proof*}
{\noindent {\sl Proof.}\quad }{\hfill
$\square$}



\newcommand {\be}{{\mathfrak b}}

\newcommand {\fe}{{\mathfrak E}}

\newcommand {\g}{{\mathfrak g}}
\newcommand {\h}{{\mathfrak h}}

\newcommand {\el}{{\mathfrak l}}
\newcommand {\me}{{\mathfrak m}}

\newcommand {\te}{{\mathfrak t}}
\newcommand {\ut}{{\mathfrak u}}

\newcommand {\sln}{\mathfrak{sl}_n}

\newcommand {\spn}{\mathfrak{sp}_{2n}}
\newcommand {\sono}{\mathfrak{so}_{2n+1}}


\newcommand {\esi}{\varepsilon}
\newcommand {\ap}{\alpha}

\newcommand {\lb}{\lambda}
\newcommand {\vp}{\varphi}



\newcommand {\ca}{{\mathcal A}}

\newcommand {\cM}{{\mathcal M}}
\newcommand {\N}{{\mathfrak N}}
\newcommand {\co}{{\mathcal O}}
\newcommand {\cP}{{\mathcal P}}

\newcommand {\BN}{{\mathbb N}}
\newcommand {\BQ}{{\mathbb Q}}
\newcommand {\BZ}{{\mathbb Z}}

\newcommand {\VV}{{\mathsf V}}

\newcommand {\md}{/\!\!/}
\newcommand {\isom}{\stackrel{\sim}{\longrightarrow}}

\newcommand {\hot}{{\mathsf{ht}}}

\newcommand {\Lie}{{\mathrm{Lie\,}}}
\newcommand {\Ker}{{\mathrm{Ker\,}}}

\newcommand {\rk}{{\mathsf{rk\,}}}
\newcommand {\spe}{{\mathsf{Spec\,}}}

\newcommand {\tri}{\mathfrak{sl}_2}
\newcommand {\GR}[2]{{\textrm{{\bf #1}}}_{#2}}

\newcommand {\ov}{\overline}

\newcommand {\beq}{\begin{equation}}
\newcommand {\eeq}{\end{equation}}

\newcommand{\vts}{\VV_{\theta_s}}

\newcommand{\eus}{\EuScript}

\newcommand {\bbk}{\mathbb C}

\newfam\Bbbfam\newfam\eufam\newfam\eusfam%
\font\Bbbfont=msbm10 scaled 1200%
\font\olala=msam10 scaled 1200%
\font\Bbbsmallfont=msbm8%
\textfont\Bbbfam=\Bbbfont\scriptfont\Bbbfam=\Bbbsmallfont

\def\square{\hbox {\olala\char"03}}

\begin{document}
\setlength{\parskip}{2pt plus 4pt minus 0pt}
\hfill {\scriptsize May 23, 2012} 
\vskip1ex

\title{Invariant Theory of little adjoint modules}
\author[D.~Panyushev]{Dmitri I. Panyushev}
\address[]{Independent University of Moscow,
Bol'shoi Vlasevskii per. 11, 119002 Moscow, \ Russia
\hfil\break\indent
Institute for Information Transmission Problems, B. Karetnyi per. 19, Moscow 127994
}
\email{panyushev@iitp.ru}
\keywords{Root system, adjoint representation, semisimple Lie algebra}
\subjclass[2010]{14L30, 17B20, 22E46}
\maketitle

\section{Introduction}
\label{sect:intro}

\noindent
Let $G$ be a complex simple Lie group having roots of different length.
Fix a triangular decomposition of $\g=\Lie G$ and the relevant objects (simple roots, 
dominant weights, etc.). In particular, let $\Delta$ be the set of all roots and
$\theta_s$ the short dominant root.
The simple $G$-module with highest weight $\theta_s$, denoted 
$\vts$, is said to be {\it little adjoint}.
There are two series of little adjoint representations (associated with  $G=Sp_{2n}$ or 
$SO_{2n+1}$)
and two sporadic cases (associated with $\GR{F}{4}$ and $\GR{G}{2}$).
We give a uniform presentation of 
invariant-theoretic properties of the little adjoint representations.
Most of these properties follows from known classification results in Invariant Theory. But
our intention is to provide conceptual proofs whenever possible. We also notice a new phenomenon; namely, a  relationship between $\vts$ and the adjoint representation of
certain simple subalgebra of $\g$.

Let  $\Pi_s$ be the set of short simple roots 
and $W(\Pi_s)$ the subgroup of the Weyl group
$W$ that is generated by the "short" simple reflections.
Let $\vts^0$ be the zero weight space of $\vts$. We prove that $\dim\vts^0=\#(\Pi_s)$,
$N_G(\vts^0)/Z_G(\vts^0)\simeq
W(\Pi_s)$, and the restriction homomorphism $\bbk[\vts]\to \bbk[\vts^0]$ induces an isomorphism
$\bbk[\vts]^G\simeq \bbk[\vts^0]^{W(\Pi_s)}$. This implies that $\bbk[\vts]^G$ is a polynomial
algebra, of Krull dimension $\#(\Pi_s)$, and 
the quotient morphism $\pi_G: \vts\to \vts\md G=\spe(\bbk[\vts]^G)$ is equidimensional.
If $v\in \vts^0$ is generic, then the stabiliser $G_v$ is  connected and semisimple,
and the root system of $G_v$ consists of all long roots in $\Delta$.
We also show that the orbit of highest weight vectors in $\vts$ is of dimension 
$2\mathsf{ht}(\theta_s)$ and $\dim\vts=(h+1){\cdot}\#(\Pi_s)$, where $h$ is the Coxeter 
number of $G$.

Let $\g(\Pi_s)$ be the semisimple subalgebra of $\g$ whose set of simple roots is $\Pi_s$.
Then $\rk\g(\Pi_s)=\#(\Pi_s)$ and $W(\Pi_s)$ is just the Weyl group of $\g(\Pi_s)$.
We give a conceptual explanation for the fact that $\Pi_s$ is a connected subset on the Dynkin diagram, so that $\el:=\g(\Pi_s)$
is actually simple.
There is a connection between $\vts$ and the adjoint representation of the group
$L=G(\Pi_s)$. Namely, $\el$ can naturally be regarded as a submodule of $\vts$
that contains $\vts^0$, and the restriction homomorphism $\bbk[\vts]\to \bbk[\el]$ induces an 
isomorphism $\bbk[\vts]^G\simeq \bbk[\el]^{L}$. Using the well-known properties of the adjoint
representation \cite{ko63}, we then prove that the null-cone $\N(\vts):=\pi_G^{-1}(\pi_G(0))$
is an irreducible complete intersection and $\vts$ admits a Kostant-Weierstrass section
(see Section~\ref{sect:null} for details).
All these results are proved conceptually. 

Let $\N(\el)$ denote the set of nilpotent elements in $\el$.
If $\co\subset \N(\el)$ is an $L$-orbit, then $G{\cdot}\co$ is a $G$-orbit in
$\N(\vts)$.  
There is a striking relation between the set of  $L$-orbits in $\N(\el)$
and the set of $G$-orbits in $\N(\vts)$, which is proved case-by-case.
The assignment $\co \mapsto G{\cdot}\co$ sets up a bijection between these two sets;
moreover,   if $\co\ne \{0\}$, then $\dim G{\cdot}\co /\dim\co =h/h_s$, where $h_s$ is the Coxeter number of $\el$. 
Using a relation of Coxeter elements, we conceptually prove  that $h/h_s\in\BN$.

In the Section~\ref{sect:remarksl}, we shortly discuss more advanced topics related to 
$\vts$ that are dealt with in \cite{short04,selecta}.

\subsection*{Main notation}
Throughout, $G$ is a connected simply-connected simple algebraic group with $\Lie G=\g$.
Fix a triangular decomposition $\g=\ut\oplus\te\oplus\ut^-$.  Then

{\bf --} \ $\Delta$ is the root system of $(\g,\te)$, $h$ is the Coxeter
number of $\Delta$, and
$W$ is the Weyl group. 

{\bf --} \ $\Delta^+$  is the set of positive roots corresponding to
$\ut$, $\theta$ is the highest root in $\Delta^+$,
and $\rho=\frac{1}{2}\sum_{\mu\in\Delta^+}\mu$. 

{\bf --} \ $\Pi=\{\ap_1,\ldots,\ap_n\}$ is the set of simple roots in $\Delta^+$
and $\vp_i$ is the fundamental weight corresponding to $\ap_i$.
If $\gamma\in\Delta$ and $\gamma=\sum_{i=1}^n c_i\ap_i$, then $\hot(\gamma)=\sum_i c_i$ is the {\it height\/} of $\gamma$.

{\bf --} \ ${\te}^*_{\BQ}$ is the $\BQ$-vector subspace of $\te^*$ generated by
the lattice of integral weights 
and $(\ \vert\ )$ is the $W$-invariant positive-definite 
inner product on ${\te}^*_{\BQ}$ induced by the Killing form on $\g$.  
As usual, $\mu^\vee=\frac{2\mu}{(\mu\vert\mu)}$ is the coroot
for $\mu\in \Delta$ and $\Delta^\vee=\{\mu^\vee\mid \mu\in\Delta\}$ is the dual root
system.

{\bf --} \ If $\lb$ is a dominant weight, then $\VV_\lb$ stands for the simple
$G$-module with highest weight $\lb$.

\noindent
For $\ap\in\Pi$, we let $r_\ap$ denote the corresponding simple reflection in $W$. 
If $\ap=\ap_i$, then we also write
$r_{\ap_i}=r_i$. 
The length function on $ W$ with respect to  $r_1,\dots,r_n$ is 
denoted by $\ell$. For any $w\in W$, we set
$\eus N(w)=\{\gamma\in\Delta^+ \mid w(\gamma) \in -\Delta^+ \}$.
It is standard that $\#\eus N(w)=\ell(w)$.

{\bf --} \ the linear span of a subset $M$ of a vector space is denoted by $\langle M\rangle$.

Our main reference on Invariant Theory is \cite{VP}.

{\small{\bf Asknowledgements.} I would like to thank the anonymous referee for several
helpful remarks and suggestions.}

\section{First properties}
\label{sect:first}

\noindent
Let $\g$ be a simple Lie algebra having two root lengths. We use subscripts
`s` and `l` to mark objects related to short and long roots, respectively.
For instance, $\Delta^+_s$ is the set of short positive roots, $\Delta=\Delta_s\sqcup
\Delta_l$, and $\Pi_s=\Pi\cap \Delta_s$. 
Recall that $\Delta_l=W{\cdot}\theta$, $\Delta_s=W{\cdot}\theta_s$, and 
$(\theta\vert\theta)/(\theta_s\vert\theta_s)=2$ or 3. 

Let $W_l$ be the subgroup of $W$ generated by
$r_\gamma$, where $\gamma\in\Delta^+_l$.
Let $W(\Pi_s)$ be the subgroup of $W$ generated by $r_\ap$, where $\ap\in\Pi_s$.
Then $W(\Pi_s)$ is a parabolic subgroup of $W$ in the sense of the theory of
Coxeter groups.

\begin{prop}    \label{semidir} \leavevmode\par
\begin{itemize}
\item[\sf (i)] \ $W(\Pi_s)=\{w\in W \mid w(\Delta^+_l)\subset \Delta^+_l\}$.
\item[\sf (ii)] \ 
$W\simeq W(\Pi_s)\ltimes W_l$.
\end{itemize}
\end{prop}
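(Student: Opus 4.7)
My plan for part \textbf{(i)} is to prove both inclusions directly. The inclusion $W(\Pi_s)\subset\{w\in W\mid w(\Delta^+_l)\subset \Delta^+_l\}$ rests on the standard fact that any simple reflection $r_\ap$ permutes $\Delta^+\setminus\{\ap\}$ and preserves root lengths; when $\ap\in\Pi_s$ we have $\ap\not\in\Delta^+_l$, so $r_\ap$ bijects $\Delta^+_l$ with itself, and hence every element of $W(\Pi_s)$ does as well. For the reverse inclusion I will argue by induction on $\ell(w)$. Suppose $w\neq 1$ and $w(\Delta^+_l)\subset\Delta^+_l$; since $w$ preserves lengths, it in fact bijects $\Delta^+_l$ with itself, so $\eus N(w)\subset\Delta^+_s$. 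Let $\gamma\in\eus N(w)$ be of minimal height. I claim $\gamma\in\Pi$: otherwise we can write $\gamma=\ap+\beta$ with $\ap\in\Pi$ and $\beta\in\Delta^+$ both of smaller height than $\gamma$, and the identity $w(\gamma)=w(\ap)+w(\beta)\in-\Delta^+$ forces at least one of $w(\ap),w(\beta)$ to lie in $-\Delta^+$; but then $\ap$ or $\beta$ belongs to $\eus N(w)$ with smaller height than $\gamma$, a contradiction. Thus $\gamma\in\Pi\cap\eus N(w)\subset\Pi_s$. Then $\ell(wr_\gamma)=\ell(w)-1$ and $wr_\gamma$ still preserves $\Delta^+_l$, so by induction $wr_\gamma\in W(\Pi_s)$, and therefore $w\in W(\Pi_s)$.

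For part \textbf{(ii)} I will assemble the three standard ingredients of a semidirect-product decomposition. Normality of $W_l$: for any $w\in W$ and long $\gamma$, one has $wr_\gamma w^{-1}=r_{w(\gamma)}$, and $w(\gamma)$ is again long, so $wW_lw^{-1}=W_l$. Triviality of $W(\Pi_s)\cap W_l$: any $w\neq 1$ in $W_l$ sends some $\gamma\in\Delta^+_l$ to $-\Delta^+_l\subset-\Delta^+$ (apply the usual inversion-set criterion to the Weyl group $W_l$ of the root system $\Delta_l$), which is incompatible with (i). Counting: $W$ acts on the set of positive systems of $\Delta_l$; the stabilizer of $\Delta^+_l$ equals $W(\Pi_s)$ by (i), while the orbit coincides with the full set of positive systems of $\Delta_l$ (because $W_l\subset W$ already acts simply transitively on these) and so has cardinality $|W_l|$. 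Consequently $|W|=|W(\Pi_s)|{\cdot}|W_l|$, and combined with the previous two facts this yields $W=W(\Pi_s)\ltimes W_l$.

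The single nontrivial technical point in the whole argument is the claim, used in the inductive step of (i), that the minimum-height element of $\eus N(w)$ is always a simple root; everything else -- the preservation of $\Delta^+_l$ by short simple reflections, the normality of $W_l$, the trivial intersection, and the orbit-stabilizer count -- is formal manipulation once part (i) is in hand.
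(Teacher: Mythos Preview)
Your proof is correct. For part~(i) your argument is essentially the same as the paper's: both observe that $w(\Delta^+_l)\subset\Delta^+_l$ forces $\eus N(w)\subset\Delta^+_s$, and then locate a simple root in $\eus N(w)$ to run the induction. The paper does this via the standard identity $\eus N(w)=r_\ap(\eus N(w'))\cup\{\ap\}$ for a reduced decomposition $w=w'r_\ap$, which places $\ap\in\eus N(w)$ for free; you recover the same $\ap$ as the minimal-height element of $\eus N(w)$. These are interchangeable.

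For part~(ii) the arguments genuinely diverge. The paper proves surjectivity of $W(\Pi_s)\times W_l\to W$ constructively: given a reduced word for $w$ containing a long simple reflection $r_\beta$, one moves it to the right end via $w_1r_\beta w_2=w_1w_2\,r_{w_2^{-1}(\beta)}$ with $w_2^{-1}(\beta)\in\Delta_l$, and $\ell(w_1w_2)<\ell(w)$ allows induction. You instead use an orbit--stabilizer count on the positive systems of $\Delta_l$ to get $|W|=|W(\Pi_s)|{\cdot}|W_l|$, which together with normality and trivial intersection forces the semidirect product. Your route is cleaner and entirely structural; the paper's route is more hands-on and yields an explicit factorization procedure $w=w_sw_l$ (which is convenient later, e.g.\ in the proof of Proposition~\ref{simple}(3)). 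Either approach is perfectly adequate here.
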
\begin{proof}
(i) Obviously, $r_\ap(\Delta^+_l)\subset \Delta^+_l$ for any $\ap\in\Pi_s$.
Hence $W(\Pi_s)\subset\{w\in W \mid w(\Delta^+_l)\subset \Delta^+_l\}$.
On the other hand, if $w(\Delta^+_l)\subset \Delta^+_l$ and
$w=w'r_\ap$ is a reduced decomposition, then $\eus N(w)\subset\Delta^+_s$ and 
the equality $\eus N(w)=r_\ap(\eus N(w'))\cup\{\ap\}$
shows that $\ap$ is necessarily short. So,
we can argue by induction on $\ell(w)$.

(ii) Clearly, $W_l$ is a normal subgroup of $W$, and $W_l\cap W(\Pi_s)=1$ by part (i).
Therefore, it suffices to prove that the product mapping 
$W(\Pi_s)\times W_l\to W$ is onto. We argue by induction on the length of $w\in W$.
Suppose $w\not\in W(\Pi_s)$ and
$w=w_1r_\beta w_2\in W$, $\beta\in\Pi_l$, is a reduced decomposition.
Then $w=w_1w_2r_{\beta'}$, where $\beta'=w_2^{-1}(\beta)\in\Delta_l$,
and $\ell(w_1w_2)<\ell(w)$. 
That is, all long simple reflections occurring in an expression for $w$
can eventually be moved up to the right. 
\end{proof}%
Fix some notation, which applies to an arbitrary $\g$-module $V$.
Write ${\cP}(\VV)$ for the set of all weights of $\VV$. 
For instance, $\cP(\g)=\Delta\cup\{0\}$.
Let $\VV^\mu$ denote the
$\mu$-weight space of $\VV$ and $m_\VV(\mu)=\dim\VV^\mu$. If $\VV=\VV_\lb$, 
then the multiplicity is denoted by $m_\lb(\mu)$. 

\begin{prop}[cf. \protect{\cite[Prop.\,2.8]{ya-tg}}]
\label{shortchar}  \leavevmode\par
\begin{itemize}
\item[\sf (i)] \  $\dim\VV_{\theta_s}=(h+1)m_{\theta_s}(0)$;
\item[\sf (ii)] \  $m_{\theta_s}(0)=\# \Pi_s$;
\item[\sf (iii)] \ $\VV_{\theta_s}$ is an orthogonal $G$-module.
\end{itemize}
\end{prop}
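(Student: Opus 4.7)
\medskip
\noindent\textbf{Proof plan.}\quad
I would prove (ii) first, deduce (i) immediately, and treat (iii) separately.

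For (ii), note that $\theta_s$ is quasi-minuscule: no dominant weight strictly lies between $0$ and $\theta_s$, so the nonzero weights of $\vts$ form the single $W$-orbit $\Delta_s$, each of multiplicity one, and $\dim\vts=|\Delta_s|+m_{\theta_s}(0)$. Applying Freudenthal's multiplicity formula at $\mu=0$,
\[
m_{\theta_s}(0)\bigl(|\theta_s+\rho|^2-|\rho|^2\bigr)
= 2\sum_{\ap\in\Delta^+}\sum_{k\ge 1}m_{\theta_s}(k\ap)(k\ap,\ap),
\]
only pairs $(\ap,k)$ with $\ap\in\Delta_s^+$, $k=1$ contribute (a long $\ap$ has no short positive integer multiple), so the right-hand side equals $|\Delta_s|(\theta_s,\theta_s)$. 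On the left, $(\rho,\theta_s^\vee)=h-1$, because $\theta_s^\vee$ is the highest root of the dual system $\Delta^\vee$, whose Coxeter number coincides with $h$ in each of the four relevant types; hence $|\theta_s+\rho|^2-|\rho|^2=h(\theta_s,\theta_s)$ and $m_{\theta_s}(0)=|\Delta_s|/h$.

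To finish (ii) I would establish $|\Delta_s|=h\cdot\#\Pi_s$, a short-root refinement of Kostant's $|\Delta|=hn$. Fix a bipartite decomposition $\Pi=\Pi_+\sqcup\Pi_-$ (possible since the Dynkin diagram is a tree) and form the associated Coxeter element $c=c_+c_-$. Kostant's theorem supplies exactly $n$ $c$-orbits on $\Delta$, each of size $h$, one orbit per simple root. Since $c\in W$ preserves root lengths, each orbit lies entirely in $\Delta_s$ or entirely in $\Delta_l$; the $\#\Pi_s$ orbits attached to short simple roots therefore partition $\Delta_s$, giving $|\Delta_s|=h\cdot\#\Pi_s$. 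Statement (i) then follows: $\dim\vts=|\Delta_s|+m_{\theta_s}(0)=(h+1)m_{\theta_s}(0)$.

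For (iii), in each of the four types $B_n,C_n,F_4,G_2$ the longest element $w_0\in W$ acts as $-\mathrm{id}$ on $\te$, so every simple $G$-module is self-dual. For a self-dual $\VV_\lb$ the Frobenius--Schur indicator equals the scalar by which the central element $\tilde w_0^{\,2}\in Z(G)$ acts on $\VV_\lb$, and this scalar depends only on the class of $\lb$ modulo the root lattice. Since $\theta_s$ is a root, this class is trivial, so $\tilde w_0^{\,2}$ acts as $+1$ on $\vts$ and the module is orthogonal.

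The most delicate step is the counting identity $|\Delta_s|=h\cdot\#\Pi_s$. Once Kostant's bipartite-Coxeter description of orbits is in hand, the conclusion is essentially forced by the length-preservation property of $W$; absent that input one would have to verify the identity case-by-case in the four types.
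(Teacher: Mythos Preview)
Your argument for (i) and (ii) is essentially identical to the paper's: Freudenthal's formula at $\mu=0$ together with $(\rho\mid\theta_s^\vee)=h-1$ gives $h\cdot m_{\theta_s}(0)=\#\Delta_s$, and then the Coxeter-orbit count $\#\Delta_s=h\cdot\#\Pi_s$ finishes (the paper cites Bourbaki for this last fact rather than Kostant's bipartite description, but it is the same statement; note also that $h(\Delta)=h(\Delta^\vee)$ holds for every root system, so no case check is needed there). For (iii) the paper takes a slightly shorter route: self-duality follows directly from $\cP(\vts)=-\cP(\vts)$ with matching multiplicities (no appeal to $w_0=-\mathrm{id}$), and orthogonality from the bare observation that a self-dual irreducible with $\VV^0\ne 0$ cannot be symplectic---which is equivalent to your root-lattice\,/\,Frobenius--Schur argument, just without the intermediary $\tilde w_0^{\,2}$.
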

\begin{proof} (i)
It is clear that ${\cP}(\vts)=\Delta_s\cup\{0\}$ and 
$m_{\theta_s}(\ap)=1$ for all $\ap\in\Delta_s$. Applying Freudenthal's weight
multiplicity formula  \cite[3.8, Proposition~D]{sam} 
to $m_{\theta_s}(0)$, we obtain  
\[
(\theta_s+2\rho|\theta_s)m_{\theta_s}(0)=2\sum_{\ap\in\Delta^+}\sum_{t\ge 1}
m_{\theta_s}(t\ap)(t\ap|\ap)=2\sum_{\ap\in\Delta_s^+}m_{\theta_s}(\ap)(\ap|\ap)
=2\sum_{\ap\in\Delta_s^+}(\ap|\ap) \ .
\]
Whence 
\[
(1+(\rho|\theta_s^\vee))m_{\theta_s}(0)=2{\cdot}\#\Delta_s^+=
\#\Delta_s=\dim\VV_{\theta_s}-m_{\theta_s}(0) \ .
\]
As $\theta_s^\vee$ is the highest root in the dual root system
$\Delta^\vee$, we have
$(\rho|\theta_s^\vee)=h-1$.

(ii) By part (i), we have $m_{\theta_s}(0)=
\displaystyle \frac{\dim\VV_{\theta_s}-m_{\theta_s}(0)}{h}=\frac{\#\Delta_s}{h}$.
Let $c\in W$ be a Coxeter element associated with $\Pi$. It is known
that each orbit of $c$ in $\Delta$ has cardinality $h$ and 
the number of orbits consisting of short roots is equal to $\#(\Pi_s)$,
see \cite[ch.VI,\,\S\,1,\,Prop.\,33]{bour}. 
Hence $\#\Delta_s=h{\cdot}\#\Pi_s$.

(iii) Since $\cP(\vts)=-\cP(\vts)$ and $m_{\theta_s}(\mu)=m_{\theta_s}(-\mu)$ for all 
$\mu\in\cP(\vts)$, 
we conclude that $\VV_{\theta_s}$ is self-dual.
Furthermore, because $\VV_{\theta_s}^0\ne 0$, it cannot be symplectic.
\end{proof}%

\begin{rmk}
It was shown by Zarhin \cite{zarhin} that
$(h+1)\dim\VV^0\le \dim \VV$ for any $\g$-module $\VV$.
Moreover, analysing his proof, one  readily concludes that the equality
can happen only if each nonzero weight of $\VV$ is a root, i.e.,
$\VV$ is either $\g=\VV_\theta$ or $\VV_{\theta_s}$. Thus, the adjoint and little adjoint
modules are distinguished by the condition that the ratio $\dim\VV/\dim\VV^0$ attains 
the minimal possible value.
\end{rmk}%
For any $\mu\in\Delta$, set $\Delta(\mu)=\{\gamma\in\Delta\mid (\gamma\vert\mu)\ne 0\}$.
Consider the partition of this set according to the sign of roots and of
the scalar product:
\[
   \Delta(\mu)=\Delta(\mu)^+_{>0}\sqcup \Delta(\mu)^+_{<0}
\sqcup \Delta(\mu)^-_{>0}\sqcup \Delta(\mu)^-_{<0} \ .
\]
Here $\Delta(\mu)^+_{>0}=\{\gamma\in\Delta^+\mid (\gamma\vert\mu)> 0\}$, and likewise for the
other subsets. \\
Since $\Delta(\mu)^+_{>0}=-\Delta(\mu)^-_{<0}$ and
$\Delta(\mu)^+_{<0}=-\Delta(\mu)^-_{>0}$, we obtain
\begin{equation}  \label{ravno}
  \#\Delta(\mu)^+=\#\Delta(\mu)_{>0} \ .
\end{equation}
Let $\eus C(\lb)$ denote the closure of the $G$-orbit of highest weight vectors
in $\VV_\lb$.

\begin{prop}  \label{dim-Cs}  \leavevmode\par
\begin{itemize}
\item[\sf (i)] \  If \ $\ap\in\Pi_s$, then \ $\#(\Delta(\ap)^+_{>0})=\hot(\theta_s)$
and \ $\#(\Delta(\ap)^+_{<0})=\hot(\theta_s)-1$;
\item[\sf (ii)] \ $\dim \eus C(\theta_s)=2\hot(\theta_s)$.
\end{itemize}
\end{prop}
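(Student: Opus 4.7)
The plan is to derive both (i) and (ii) from the single count $\#\Delta(\theta_s)^+ = 2\hot(\theta_s)-1$, using as the crucial input the fact that $\theta_s^\vee$ is the highest root of the dual root system $\Delta^\vee$.

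For (i), I will first exploit that since $\ap$ is simple, $r_\ap$ permutes $\Delta^+\setminus\{\ap\}$ and, being an isometry, negates $(\cdot|\ap)$; hence it restricts to a bijection $\Delta(\ap)^+_{>0}\setminus\{\ap\}\isom\Delta(\ap)^+_{<0}$, yielding $\#\Delta(\ap)^+_{>0}=\#\Delta(\ap)^+_{<0}+1$. Combined with \reb{ravno} one gets $\#\Delta(\ap)^+_{>0}+\#\Delta(\ap)^+_{<0}=\#\Delta(\ap)^+=\#\Delta(\ap)_{>0}$. Since $\#\Delta(\mu)_{>0}$ depends only on the $W$-orbit of $\mu$ and $\ap\in W{\cdot}\theta_s$, and since $\theta_s$ being dominant forces $\Delta(\theta_s)_{>0}=\Delta(\theta_s)^+$, both halves of (i) reduce to the displayed identity.

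To establish the identity, the highest-root property applied in $\Delta^\vee$ (where $\theta_s^\vee$ is highest) forces $\langle\theta_s,\gamma^\vee\rangle = 2(\theta_s|\gamma)/(\gamma|\gamma) \in\{0,1,2\}$ for every $\gamma\in\Delta^+$, with value $2$ attained precisely at $\gamma=\theta_s$. Setting $N_j=\#\{\gamma\in\Delta^+:\langle\theta_s,\gamma^\vee\rangle=j\}$, we have $N_2=1$ and $\#\Delta(\theta_s)^+=N_1+N_2$. Summing $\langle\theta_s,\gamma^\vee\rangle$ over $\Delta^+$ and using $\sum_{\gamma\in\Delta^+}\gamma^\vee=2\rho^\vee$ together with the identity $\langle\theta_s,\rho^\vee\rangle=\hot(\theta_s)$ (immediate from $\rho^\vee=\sum_i\vp_i^\vee$ paired with $\theta_s=\sum_i m_i\ap_i$) gives $N_1+2=2\hot(\theta_s)$, proving the identity. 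Part (ii) is then immediate: the standard formula $\dim\eus C(\lb)=\#\Delta(\lb)^+_{>0}+1$ for dominant $\lb$ yields $\dim\eus C(\theta_s)=2\hot(\theta_s)$.

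The main obstacle I anticipate is the middle step: one must carefully track the Cartan-pairing conventions through the duality $(\Delta^\vee)^\vee=\Delta$ to confirm that $\langle\theta_s,\gamma^\vee\rangle\in\{0,1,2\}$ truly inherits from the highest-root property of $\theta_s^\vee$ in $\Delta^\vee$. Once this is in place, the rest is routine bookkeeping with the reflection symmetry, \reb{ravno}, and the orbit-dimension formula.
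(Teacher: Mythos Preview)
Your argument is correct and follows essentially the same route as the paper: both use the simple-reflection bijection $r_\ap\bigl(\Delta(\ap)^+_{>0}\setminus\{\ap\}\bigr)=\Delta(\ap)^+_{<0}$, Eq.~\reb{ravno}, the $W$-conjugacy of $\ap$ and $\theta_s$, and then compute $\#\Delta(\theta_s)^+$ by summing $(\gamma^\vee\vert\theta_s)=\langle\theta_s,\gamma^\vee\rangle$ over $\Delta^+$ against $\sigma=\rho^\vee$, invoking the highest-root property of $\theta_s^\vee$ in $\Delta^\vee$ to bound these integers by~$1$ (for $\gamma\ne\theta_s$). The only cosmetic difference is that the paper spells out the orbit-dimension formula $\dim G{\cdot}v=1+\#\{\gamma\in\Delta^+\mid(\gamma\vert\theta_s)>0\}$ via the $U_-$-action, whereas you quote it as standard; your worry about the duality bookkeeping is handled in the paper by simply asserting $(\gamma^\vee\vert\theta_s)\in\{0,1\}$ for $\gamma\ne\theta_s$, which is exactly the statement you derive.
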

\begin{proof}
(i) If $\ap$ is simple, then $r_\ap\bigl(\Delta(\ap)^+_{>0}\setminus\{\ap\}\bigr)=
\Delta(\ap)^+_{<0}$. Hence either of the two equalities implies the other.
Set $d_\ap=\#\bigl(\Delta(\ap)^+_{>0}\bigr)$. Then \ $\#\Delta(\ap)^+=2d_\ap-1$. 
To compute $d_\ap$, we look
at these subsets for $\theta_s$. Here
\[
\Delta(\theta_s)^+_{>0}=\Delta(\theta_s)_{>0}=\Delta(\theta_s)^+ .
\]
Set $\sigma =\frac{1}{2}\sum_{\gamma\in\Delta^+}\gamma^\vee$. Then $(\sigma\vert\gamma)=\hot(\gamma)$
for any $\gamma\in\Delta$. On the other hand, if $\gamma\in\Delta^+\setminus\{\theta_s\}$, 
then $(\gamma^\vee|\theta_s)\in\{0,1\}$. Therefore
\begin{multline*}
\hot(\theta_s)=(\sigma\vert\theta_s)=
\frac{1}{2}\bigl(\#(\Delta(\theta_s)^+_{>0})+1\bigr)= \\
\frac{1}{2}\bigl(\#(\Delta(\theta_s)_{>0})+1\bigr)=
\frac{1}{2}\bigl(\#(\Delta(\ap)_{>0})+1\bigr)=
\frac{1}{2}\bigl(\#(\Delta(\ap)^+)+1\bigr)=d_\ap \ .
\end{multline*}
In the last line, we have used Eq.~\eqref{ravno} with $\mu=\ap$ and
the fact that $\ap$ and $\theta_s$ are $W$-conjugate.

(ii) Let $v\in\vts$ be a highest weight vector. Then 
\[
\dim G{\cdot}v=1+\dim U_-{\cdot}v=1+\#\{\gamma\in \Delta^+\mid
\theta_s-\gamma\in \cP(\vts)\}=
1+\#\{\gamma\in\Delta^+ \mid (\gamma|\theta_s)>0\} \ .
\]
According to the proof of part (i), the last expression is equal to $2\hot(\theta_s)$.
\end{proof}
\begin{rmk}   \label{dual-cox}
Let $h^*(\Delta)$ denote the {\it dual Coxeter number\/} of $\Delta$.
By definition,  $h^*(\Delta)=1+(\rho|\theta^\vee)$. Notice that
$\theta^\vee$ is the short dominant root in $\Delta^\vee$
and $(\rho\vert\theta^\vee)$ is the height of $\theta^\vee$
in $\Delta^\vee$. Therefore, $h^*(\Delta^\vee)=1+(\sigma |\theta_s)=1+\hot(\theta_s)$.
This also means that $\dim \eus C(\theta_s)=2h^*(\Delta^\vee)-2$.
This can be compared with the well-known result that $\dim \eus C(\theta)=2h^*(\Delta)-2$.
\end{rmk}

\section{Generic stabilisers and the algebra of invariants}
\label{sect:invariants}

\noindent
Set $\h:=\te\oplus(\underset{\mu\in\Delta_l}{\oplus}\g^\mu)\subset \g$.
Obviously, it is a Lie subalgebra of $\g$.
Let $H$ denote the connected subgroup of $G$ with Lie algebra $\h$.
Then $\rk H=\rk G$ and $H$ is semisimple. The Weyl group
of $(\h,\te)$ is $W_l$.
Let $\pi_G:\vts\to \vts\md G:=\spe\bbk[\vts]^G$ denote the quotient morphism. 
For any $\mu\in \Delta$, fix a nonzero element $e_\mu\in\g^\mu$. 

\begin{thm}    \label{sop} \leavevmode\par
\begin{itemize}
\item[\sf (i)] \ $\vts^0=(\vts)^H$;
\item[\sf (ii)] \ $G{\cdot}\VV_{\theta_s}^0$ is dense in $\vts$ and
$\h$ is a generic stationary subalgebra for $(G:\vts)$;
\item[\sf (iii)] \  $\bbk[\vts]^G\simeq \bbk[\vts^0]^{W(\Pi_s)}$;
\item[\sf (iv)] \  $\bbk[\vts]^G$ is a polynomial algebra and $\pi_G$ is equidimensional.
\item[\sf (v)] \  All the fibres of $\pi_G$ are of dimension \ $h{\cdot}\dim \vts^0=
h{\cdot}\#\Pi_s$.
\end{itemize}
\end{thm}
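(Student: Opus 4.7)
The plan is to prove the five parts in order, treating (i)--(ii) as direct computations, (iii) as the heart of the argument via Luna--Richardson, and (iv)--(v) as formal consequences.

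For (i), the inclusion $\vts^H\subseteq\vts^T=\vts^0$ is immediate since $T\subset H$. Conversely, if $v\in\vts^0$ and $\mu\in\Delta_l$, then $e_\mu v$ has $\te$-weight $\mu\notin\cP(\vts)=\Delta_s\cup\{0\}$ by Proposition~\ref{shortchar}, so $e_\mu v=0$. Together with $\te\cdot\vts^0=0$, this gives $\h\cdot\vts^0=0$.

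For (ii), I would analyze the differential of the action map $\phi\colon G\times\vts^0\to\vts$ at $(e,v)$ for generic $v$. Its image is $\g v+\vts^0$; since $\h v=0$ by (i), $\g v=\sum_{\mu\in\Delta_s}\bbk\, e_\mu v$. Each linear map $L_\mu\colon\vts^0\to\vts^\mu$, $u\mapsto e_\mu u$, is nonzero (as $\vts$ is irreducible and $\vts^\mu$ is one-dimensional, so $\vts^\mu$ must be reached from $\vts^0$ by $e_\mu$), hence outside the proper subset $\bigcup_{\mu\in\Delta_s}\ker L_\mu$ of $\vts^0$ we have $\g v=\bigoplus_{\mu\in\Delta_s}\vts^\mu$, so $d\phi_{(e,v)}$ surjects onto $\vts$. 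Hence $G\cdot\vts^0$ is dense. By (i), $H\subseteq G_v$; the dimension count $\dim G_v=\dim G-\dim G{\cdot}v=\dim G-\#\Delta_s=n+\#\Delta_l=\dim\h$ forces $\g_v=\h$ for generic $v$.

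Part (iii) is the main obstacle. The restriction $\bbk[\vts]^G\to\bbk[\vts^0]$ is injective by the density of $G\cdot\vts^0$, and lands in $\bbk[\vts^0]^{W(\Pi_s)}$, because for $\ap\in\Pi_s$ a representative lift of $r_\ap$ in $N_G(T)\subset G$ preserves $\vts^0$. For surjectivity I would invoke the Luna--Richardson theorem: since $H$ is reductive, generic orbits $G\cdot v$ are closed, and the theorem yields the isomorphism $\bbk[\vts]^G\simeq\bbk[\vts^H]^{N_G(H)/H}$. The identification $N_G(H)/H\simeq W(\Pi_s)$ then follows from Proposition~\ref{semidir}(ii): the Weyl group of $H$ is $W_l$, which is normal in $W$ with quotient $W(\Pi_s)$.

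For (iv)--(v), once (iii) is in hand, Chevalley--Shephard--Todd yields polynomiality of $\bbk[\vts^0]^{W(\Pi_s)}$ provided $W(\Pi_s)$ acts on $\vts^0$ as a reflection group. This is most transparently seen through the inclusion $\el\hookrightarrow\vts$ advertised in the introduction, which identifies $\vts^0$ with the reflection representation of $W(\el)=W(\Pi_s)$; alternatively, each $r_\ap$ ($\ap\in\Pi_s$) is an involution fixing a hyperplane in $\vts^0$ by inspection of the $\tri(\ap)$-module decomposition of $\vts$. Equidimensionality of $\pi_G$ then follows from the polar-representation structure of $(G,\vts)$ (Dadok--Kac): with $\vts^0$ a Cartan subspace and a polynomial invariant ring, all fibers of $\pi_G$ share the generic fiber dimension $\dim\vts-\dim\vts^0=h\cdot\#\Pi_s$, which is precisely (v).
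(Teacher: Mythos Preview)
Your argument for (i)--(iii) is essentially the paper's. Two small points: in (ii), the claim that $L_\mu\ne 0$ does not follow from irreducibility alone (irreducibility gives $\mathcal{U}(\g)\vts^0=\vts$, not $e_\mu\vts^0=\vts^\mu$); the paper instead observes that if one $\tilde e_\mu$ vanished then by $W$-conjugacy all would, forcing $\vts^0\subset(\vts)^G$. In (iii), the identification $N_G(H)/H\simeq W(\Pi_s)$ deserves the short computation $N_G(H)=N_G(T)H$ (conjugate $T$ inside $H$), whence $N_G(H)/H\simeq N_G(T)/N_H(T)\simeq W/W_l$, rather than just quoting Proposition~\ref{semidir}.

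Part (iv) is where you diverge. Your polynomiality argument via Chevalley--Shephard--Todd is fine once you know each $r_\ap$ acts as a reflection on $\vts^0$, and your direct $\tri(\ap)$-argument for that is valid (the $\el$-embedding you also mention is established in the paper only \emph{after} this theorem, so should be avoided here). The equidimensionality step, however, is not adequately supported: Dadok--Kac prove that polar representations have $\bbk[V]^G\simeq\bbk[\ce]^{W_\ce}$ with $W_\ce$ a reflection group, but ``polar with polynomial invariants $\Rightarrow$ $\pi_G$ equidimensional'' is not a statement in that paper. What is actually needed is precisely the content of \cite{fin}: if $G$ is connected, $\Gamma$ finite, and restriction to a subspace $U$ with $\ov{G\cdot U}=V$ induces $\bbk[V]^G\simeq\bbk[U]^\Gamma$, then $\pi_G$ is equidimensional. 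The paper simply cites \cite{fin} for both conclusions of (iv) at once; your route unbundles polynomiality correctly but leaves equidimensionality resting on a reference that does not carry it.
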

\begin{proof}  \leavevmode\par
(i) \ Since $T\subset H$, we have $\VV_{\theta_s}^0\supset (\VV_{\theta_s})^H$.
On the other hand, if $\mu\in \Delta_l$,  then $e_\mu{\cdot}\VV_{\theta_s}^0=0$.

(ii) \ 
By Elashvili's Lemma \cite[\S 1]{alela}, $G{\cdot}\VV_{\theta_s}^0$ is dense in 
$\VV_{\theta_s}$ if and only if there is $x\in \VV_{\theta_s}^0$ such that 
$\g{\cdot}x+\VV_{\theta_s}^0=\VV_{\theta_s}$.
To prove the last equality, take any $\mu\in\Delta_s$ and
consider $e_\mu$ as the operator 
$\tilde e_\mu: \VV_{\theta_s}^0\to \VV_{\theta_s}^{\mu}$.
If it were zero operator, then all such operators
would be zero, since $W{\cdot}\mu=\Delta_s$.
That is, we would obtain $\VV_{\theta_s}^0=(\VV_{\theta_s})^G$, which is absurd.
Hence $\Ker \tilde e_\mu $ is a hyperplane in $\vts^0$ for any $\mu\in\Delta_s$.
It follows that, for any $x\in \vts^0\setminus \displaystyle
\bigcup_{\mu\in\Delta_s} \Ker \tilde e_\mu$,  we have $\g_x=\h$ and $\g{\cdot}x=
\oplus_{\mu\ne 0} \vts^\mu$.

(iii) \ By part (ii), if $x\in\vts^0$ is generic, then the
identity component of $G_x$ is $H$.
Since the orbit $G{\cdot}x$ is closed for any $x\in\vts^0=(\vts)^H$ \cite{ko63}, we may 
apply a generalization of the Chevalley restriction theorem
\cite[Theorem\,5.1]{lr}. It claims that
\[
   \bbk[\vts]^G\simeq \bbk[\vts^0]^{N_G(H)/H} \ .
\]
Since $N_G(H)/H=N_G(T)H/H\simeq N_G(T)/N_H(T)\simeq W/W_l\simeq W(\Pi_s)$, we are done.

(iv) \ Since $G$ is connected and $W(\Pi_s)$ is finite, this follows from (iii)
and  \cite{fin}.

(v) \ This follows from (iv) and Prop.~\ref{shortchar}.
\end{proof}%
\begin{rmk}   \label{rem-CSS}
a) The $G$-module $\vts$  is {\it stable}, i.e., the union of closed $G$-orbits 
contains a dense open subset of $\vts$. This follows from \cite{po}, since a generic stationary  subalgebra $\h$ is reductive; 
or, from \cite{luna72}, since $\vts$ is an orthogonal $G$-module. 
The stability can also be derived from the equality $\ov{G{\cdot}\vts^0}=\vts$
and the fact that each $G$-orbit meeting the zero weight space 
is closed \cite[Remark 11 on p.\,354]{ko63}.
\\
b)  The equality $\vts=\vts^0\oplus \g{\cdot}x$, which holds for almost all $x\in\vts^0$,
means that $\vts^0$ is a {\it Cartan subspace\/} of $\vts$ in the sense of \cite{polar} and
\cite{R&S}. 
\end{rmk}
By Theorem~\ref{sop}(ii), the identity component of a generic stabiliser is conjugate to $H$.
Below, we prove that generic stabilisers are connected, i.e., $H$ itself is a generic stabiliser.

In what follows, $(\ ,\ )_s$ stands for a nonzero $G$-invariant symmetric bilinear form on 
$\vts$. 
As we have proved, $\eus H_\mu=:\Ker\tilde e_\mu$ is a hyperplane in
$\vts^0$ for any $\mu\in\Delta_s$. 
Our next goal is to study the hyperplane arrangement  
obtained in this way. For each $\mu\in\Delta_s$, fix a nonzero vector $v_\mu\in V_{\theta_s}^\mu$.
Let $\{e_\mu, h_\mu, e_{-\mu}\}$ be a standard $\tri$-triple in $\g$ corresponding
to $\mu\in \Delta^+_s$. In particular, $\mu(h_\mu)=2$. 
Set $\tri(\mu)=\langle e_\mu, h_\mu, e_{-\mu} \rangle$.

\begin{prop}    \label{konfig} \leavevmode\par
\begin{itemize}
\item[\sf (i)] \ For any $\mu\in\Delta^+_s$, we have
$\eus H_\mu=\eus H_{-\mu}$, 
and the restriction of\/ $(\ ,\ )_s$ to $\eus H_\mu$ is 
non-degenerate;
$\langle e_\mu{\cdot} v_{-\mu}\rangle=
\langle e_{-\mu}{\cdot}v_\mu\rangle$, and it is the orthogonal complement to
$\eus H_\mu$ in $\VV_{\theta_s}^0$.
\item[\sf (ii)] \ Suppose that  $\gamma,\mu\in \Delta_s$ and $\nu:=\gamma-\mu\in \Delta_l$. 
Then $\eus H_\gamma=\eus H_\mu$.
\end{itemize}
\end{prop}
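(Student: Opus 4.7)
My plan is to exploit the $\tri(\mu)$-action on $\vts$ for (i), and a companion $\tri(\nu)$-action linking $\gamma$ to $\mu$ for (ii), using throughout the structural fact that $\cP(\vts)=\Delta_s\cup\{0\}$, so that $\vts^\lambda=0$ whenever $\lambda$ is a long root, and also whenever $\lambda=\pm 2\mu$ with $\mu$ short.

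For (i), I would first observe that $M_\mu:=\vts^{-\mu}\oplus\vts^0\oplus\vts^\mu$ is an $\tri(\mu)$-submodule of $\vts$: the only check needed is $e_{\pm\mu}\cdot\vts^{\pm\mu}\subset\vts^{\pm 2\mu}=0$, which holds because $2\mu\notin\Delta$. The $h_\mu$-weights on $M_\mu$ are $-2,0,2$ with multiplicities $1,k,1$ where $k=\dim\vts^0$, so the only $\tri$-irreducibles that can occur are the adjoint and the trivial representation; matching multiplicities forces exactly one adjoint summand together with $k-1$ trivial summands. The trivial isotypic component, i.e.\ the $\tri(\mu)$-invariants in $\vts^0$, equals $\eus H_\mu\cap\eus H_{-\mu}$ and has dimension $k-1$; as $\dim\eus H_\mu=\dim\eus H_{-\mu}=k-1$, this forces $\eus H_\mu=\eus H_{-\mu}$. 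The $0$-weight line of the adjoint summand is simultaneously $\langle e_\mu v_{-\mu}\rangle$ and $\langle e_{-\mu}v_\mu\rangle$. Orthogonality follows from $\g$-invariance of $(\ ,\ )_s$: for $x\in\eus H_\mu$,
\[
(x,e_\mu v_{-\mu})_s=-(e_\mu x,v_{-\mu})_s=0,
\]
so $\eus H_\mu\subset\langle e_\mu v_{-\mu}\rangle^\perp$, with equality by a dimension count (noting that $(\ ,\ )_s$ is non-degenerate on $\vts^0$ by Prop.~\ref{shortchar}(iii) and orthogonality of distinct weight spaces). Non-degeneracy of $(\ ,\ )_s|_{\eus H_\mu}$ then reduces to $e_\mu v_{-\mu}$ being non-isotropic, which is immediate from $(e_\mu v_{-\mu},e_\mu v_{-\mu})_s=-(v_{-\mu},e_\mu^2 v_{-\mu})_s$ together with the nonvanishing of $e_\mu^2 v_{-\mu}\in\vts^\mu$ inside the adjoint summand.

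For (ii), by (i) it suffices to show $\langle e_\mu v_{-\mu}\rangle=\langle e_\gamma v_{-\gamma}\rangle$. The long root $\nu=\gamma-\mu$ is instrumental because $e_{\pm\nu}\cdot\vts^0\subset\vts^{\pm\nu}=0$. The heart of the argument is to prove that $e_\nu\cdot v_{-\gamma}$ is a \emph{nonzero} multiple of $v_{-\mu}$: it lies in the $1$-dimensional space $\vts^{-\gamma+\nu}=\vts^{-\mu}$, and a scalar-product check that $-\gamma-\nu\notin\Delta$ shows $v_{-\gamma}$ sits at the lowest end of a $2$-dimensional $\tri(\nu)$-string, whence $e_\nu v_{-\gamma}\ne 0$. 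Writing $e_\nu v_{-\gamma}=a v_{-\mu}$ with $a\ne 0$, and using $e_\mu v_{-\gamma}\in\vts^{\mu-\gamma}=\vts^{-\nu}=0$ together with the Chevalley relation $[e_\mu,e_\nu]=N_{\mu,\nu}e_\gamma$ (with $N_{\mu,\nu}\ne 0$ since $\gamma=\mu+\nu\in\Delta$), the computation
\[
a\,e_\mu v_{-\mu}=e_\mu(e_\nu v_{-\gamma})=[e_\mu,e_\nu]v_{-\gamma}=N_{\mu,\nu}\,e_\gamma v_{-\gamma}
\]
concludes the proof.

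The main technical obstacle in both parts lies in verifying the vanishing of certain weight spaces ($\vts^{\pm 2\mu}=0$ for (i), and $\vts^{\pm\nu}=\vts^{\mu-2\gamma}=0$ for (ii)); these reduce to short squared-length calculations in $\Delta$ and can be handled uniformly across the doubly- and triply-laced cases. Once these vanishings are in place, the $\tri$-representation theory does the rest mechanically.
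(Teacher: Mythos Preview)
Your proof is correct and, for part~(i), essentially identical to the paper's: both arguments decompose the $\tri(\mu)$-module $\vts^{-\mu}\oplus\vts^0\oplus\vts^\mu$ and read off the claims from the single adjoint summand.

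For part~(ii) you use the same key ingredients as the paper---the relation $[e_\mu,e_\nu]\propto e_\gamma$ and the vanishing $e_\nu\cdot\vts^0=0$---but apply them on $\vts^{-\gamma}$ to identify the \emph{complementary lines} $\langle e_\mu v_{-\mu}\rangle=\langle e_\gamma v_{-\gamma}\rangle$, which costs you the extra nonvanishing check $e_\nu v_{-\gamma}\ne 0$ (and hence the root-length computation ruling out $-\gamma-\nu\in\Delta$). The paper instead applies the same identity directly on $\vts^0$: for $v\in\vts^0$ one gets $e_\gamma\cdot v=-e_\nu\cdot(e_\mu\cdot v)$, whence $\eus H_\mu\subset\eus H_\gamma$ immediately, and equality follows since both are hyperplanes. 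This is a bit shorter and avoids the auxiliary string argument, but the substance is the same.
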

\begin{proof}
(i) \ We have $e_{-\mu}{\cdot}(e_\mu{\cdot} v_{-\mu})=-h_{\mu}{\cdot}v_{-\mu}=
\mu(h_{\mu}){\cdot}v_{-\mu}=2v_{-\mu}\ne 0$. Also,
$h_{\mu}{\cdot}(e_\mu{\cdot} v_{-\mu})=[h_\mu,e_\mu]{\cdot}v_{-\mu}
+e_{\mu}(h_{\mu}{\cdot}v_{-\mu})=0$.
It follows from these equalities and the $\tri$-theory that 
$e_{\mu}{\cdot}(e_\mu{\cdot} v_{-\mu})\ne 0$.
Thus, $\langle v_{-\mu}, e_\mu{\cdot} v_{-\mu}, e_{\mu}{\cdot}(e_\mu{\cdot} v_{-\mu})
\rangle$ is a 3-dimensional simple $\tri(\mu)$-module.
Since $e_{\mu}{\cdot}(e_\mu{\cdot} v_{-\mu})$ is proportional to $v_\mu$, we obtain 
$\langle e_\mu{\cdot} v_{-\mu}\rangle=\langle e_{-\mu}{\cdot}v_\mu\rangle$.

Since $(e_\mu{\cdot} v_{-\mu},e_{-\mu}{\cdot} v_{\mu})_s=
-(e_{-\mu}{\cdot}(e_\mu{\cdot} v_{-\mu}), v_{\mu})_s\ne 0$, the line
$\langle e_\mu{\cdot} v_{-\mu}\rangle$ is not isotropic.
Finally, $0=(\eus H_\mu, e_\mu{\cdot} v_{-\mu})_s$. Hence $\eus H_\mu=
\langle e_\mu{\cdot} v_{-\mu}\rangle^\perp$. By the symmetry, we conclude that
$\eus H_\mu=\eus H_{-\mu}$.

(ii) \ Up to a nonzero factor, we have $[e_\mu,e_\nu]=e_\gamma$.
Consequently, for any $v\in \vts^0$,  
\[
e_\gamma{\cdot}v=[e_\mu,e_\nu]{\cdot}v=
(e_\mu e_\nu-e_\nu e_\mu){\cdot}v=
-e_\nu{\cdot}(e_\mu{\cdot}v) \ .
\]
This readily implies that $\Ker\tilde e_\gamma=\Ker \tilde e_\mu$, i.e.,
$\eus H_\gamma = \eus H_\mu$.
\end{proof}%
Let $\g(\Pi_s)$ be the Lie subalgebra of $\g$ generated by
$\g^{\pm\ap}$, $\ap\in\Pi_s$. Then $\g(\Pi_s)$ is semisimple and
its root system is $\Delta(\Pi_s):=\Delta\cap\mathbb Z\Pi_s$.
It is easily seen that $\g(\Pi_s)$ is the commutant of a Levi subalgebra of $\g$. 
Obviously, $\Pi_s$ is a set of simple roots for $\g(\Pi_s)$ and 
$W(\Pi_s)$ is the Weyl group of $\g(\Pi_s)$. 
Notice that $\Delta(\Pi_s)$ is a proper subset of $\Delta_s$.
Let $G(\Pi_s)$ be the connected semisimple subgroup of $G$
with Lie algebra $\g(\Pi_s)$. 

\begin{lm}
$\vts\vert_{G(\Pi_s)}$ contains the adjoint representation of 
$G(\Pi_s)$. If\/ $\widetilde\VV$ is any other simple $G(\Pi_s)$-submodule of\/ $\vts$, then
$\widetilde\VV\cap \vts^0=\{0\}$.
\end{lm}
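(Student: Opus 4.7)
The plan is to exhibit a natural candidate for the adjoint copy inside $\vts$ and identify it via a character comparison. Set $M:=\vts^0\oplus\bigoplus_{\mu\in\Delta(\Pi_s)}\vts^\mu$; this is well-defined because $\Delta(\Pi_s)\subset\Delta_s=\cP(\vts)\setminus\{0\}$, so every summand other than $\vts^0$ is one-dimensional and $\dim M=\#\Delta(\Pi_s)+\#\Pi_s=\dim\el$. My first step is to verify that $M$ is $L$-stable (where $L=G(\Pi_s)$): it suffices to check that each Chevalley generator $e_{\pm\ap}$ with $\ap\in\Pi_s$ preserves $M$. On $\vts^\mu$ with $\mu\in\Delta(\Pi_s)$, the image lies in $\vts^{\mu\pm\ap}$, which is either zero or indexed by a root in $\Delta\cap\BZ\Pi_s=\Delta(\Pi_s)$; on $\vts^0$ the image lies in $\vts^{\pm\ap}\subset M$.

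Next I would match characters of the $L$-modules $M$ and $\el$. The restriction map $\te^*\to(\te\cap\el)^*$ is injective on $\BZ\Pi_s$ because the Cartan matrix of $\el$ is nonsingular, so no nonzero element of $\Delta(\Pi_s)$ restricts to zero on $\te\cap\el$. It follows that the zero $(\te\cap\el)$-weight subspace of $M$ is precisely $\vts^0$, of dimension $\#\Pi_s=\rk\el$, while the nonzero $(\te\cap\el)$-weights of $M$ are exactly the roots of $\el$, each with multiplicity one. This matches the character of $\el$ acting on itself by the adjoint representation, so by complete reducibility $M\simeq\el$ as $L$-modules. This proves the first assertion and, crucially, shows $\vts^0\subset M$.

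For the second assertion, let $\widetilde\VV\subset\vts$ be a simple $L$-submodule with $\widetilde\VV\cap\vts^0\ne\{0\}$ and pick $v\ne 0$ in the intersection. Since $\vts^0\subset M$ and $M$ is $L$-invariant, the cyclic $L$-submodule generated by $v$ lies in $M$; simplicity of $\widetilde\VV$ forces it to equal this cyclic submodule, so $\widetilde\VV\subset M\simeq\el$, i.e.\ $\widetilde\VV$ is isomorphic to a simple summand of the adjoint representation of $L$. The only delicate step I anticipate is the $L$-stability verification, which rests on the closure property $(\Delta(\Pi_s)+\Pi_s)\cap\Delta\subset\Delta(\Pi_s)$; this is immediate from the definition $\Delta(\Pi_s)=\Delta\cap\BZ\Pi_s$.
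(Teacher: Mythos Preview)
Your proof is correct and follows essentially the same approach as the paper: both define the same subspace $M=\vts^0\oplus\bigoplus_{\mu\in\Delta(\Pi_s)}\vts^\mu$, verify it is $L$-stable, and identify it with $\el$ by comparing weights (the paper invokes Proposition~\ref{shortchar}(ii), which is exactly your dimension count $\dim\vts^0=\#\Pi_s$). The only cosmetic difference is in the second assertion: you argue by the contrapositive via a cyclic-module argument, while the paper writes down the explicit complementary $L$-submodules $\bigoplus_{\mu\in\Delta_s^{\pm}\setminus\Delta(\Pi_s)}\vts^\mu$ and observes they miss $\vts^0$; both yield the same conclusion.
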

\begin{proof}
Consider the subspace 
\[
  \vts^0\oplus(\underset{\mu\in\Delta(\Pi_s)}{\oplus}\vts^\mu) \subset\vts \ .
\]
It is clear that it is a $G(\Pi_s)$-submodule of $\vts$, and using Proposition~\ref{shortchar}(ii) one readily concludes that it
is isomorphic to $\g(\Pi_s)$. 
The complementary $G(\Pi_s)$-submodules are \quad 
$\bigoplus_{\mu\in \Delta^+_s\setminus \Delta(\Pi_s)} \vts^\mu$ \ and \ 
$\bigoplus_{\mu\in \Delta^-_s\setminus \Delta(\Pi_s)} \vts^\mu$ .
\end{proof}

We shall  identify the $G(\Pi_s)$-module $\g(\Pi_s)$ with the above 
submodule of $\vts$.
Consider the commutative diagram
\begin{equation}  \label{diagram}
\begin{CD}
\vts^0  
          @>>> \g(\Pi_s)                @>>> \vts  \\
@VV{\pi_{W(\Pi_s)}}V                    @VV\pi_{G(\Pi_s)}V        @VV{\pi_G}V    \\
\vts^0\md W(\Pi_s) @>g>> \g(\Pi_s)\md G(\Pi_s)  @>f>> \vts\md G 
\end{CD}
\end{equation}
Here the arrows in the top row are embeddings and the vertical arrows are
the quotient morphisms. Recall that the  $W(\Pi_s)$-action on $\vts^0$
arises from the identification $W(\Pi_s)\simeq W/W_l$. 
The existence of $g$ follows from the fact that $W(\Pi_s)$
can also be regarded as a subquotient of $G(\Pi_s)$.
By Theorem~\ref{sop}(iii), the composition $f{\circ} g$ is an isomorphism.
Furthermore, $g$ is finite and surjective, and $f$ is surjective.
Therefore,  both $f$ and $g$ are isomorphisms.
From this we deduce that action of $W(\Pi_s)$ on $\vts^0$
is isomorphic to the reflection representation of
the Weyl group of $G(\Pi_s)$ on the Cartan subalgebra in $\g(\Pi_s)$.

From these  properties of diagram~\eqref{diagram} we derive some further
conclusions. 

\begin{prop}  \label{simple}  \leavevmode\par

1. The Lie algebra $\g(\Pi_s)$ is simple. 

2. The generic stabiliser for the action $(G:\vts)$ is connected (and equal to $H$). 

3. The set of  hyperplanes $\{\eus H_\mu\}_{\mu\in\Delta^+_s}$
coincides with $\{\eus H_\mu\}_{\mu\in\Delta(\Pi_s)^+}$. All the hyperplanes in
the last set are different. 
\end{prop}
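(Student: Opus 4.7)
The plan hinges on diagram~\eqref{diagram} and the consequence, recorded just above the proposition, that the $W(\Pi_s)$-action on $\vts^0$ is equivalent to the reflection representation of the Weyl group of $\g(\Pi_s)$ on its Cartan subalgebra $\te(\Pi_s)$.

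I would dispose of Part~2 first. Take a generic $x\in\vts^0$; by Theorem~\ref{sop}(ii), $G_x^\circ=H$, so $G_x\subseteq N_G(H)$. The argument reproduced in the proof of Theorem~\ref{sop}(iii) identifies $N_G(H)/H\simeq W(\Pi_s)$, and under this identification the image of $G_x/H$ is the $W(\Pi_s)$-stabiliser of $x\in(\vts)^H=\vts^0$. Since the $W(\Pi_s)$-action is a faithful finite reflection action with reflection hyperplanes contained in the finite union $\bigcup_{\mu\in\Delta_s^+}\eus H_\mu$, a generic $x$ avoids all of them, so its $W(\Pi_s)$-stabiliser is trivial and $G_x=H$ is connected.

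Part~3 comes next. Proposition~\ref{konfig}(ii) allows me to replace any $\mu\in\Delta_s^+\setminus\Delta(\Pi_s)$ by a short positive root of smaller height lying in the same $\eus H$-equivalence class: if the simple-root expansion of $\mu$ uses any long simple root, one can peel off an appropriate long positive root $\nu$ so that $\mu-\nu\in\Delta_s^+$, and $\eus H_\mu=\eus H_{\mu-\nu}$. Iterating, every $\mu\in\Delta_s^+$ descends to an element of $\Delta(\Pi_s)^+$; since $\Delta(\Pi_s)^+\subseteq\Delta_s^+$, this proves the two indexed families coincide. For distinctness within $\{\eus H_\mu\}_{\mu\in\Delta(\Pi_s)^+}$, use the identification $\vts^0\simeq\te(\Pi_s)$: under it, $\eus H_\mu$ becomes the reflecting hyperplane $\ker\mu\subset\te(\Pi_s)$ of the reflection $r_\mu$ in the Weyl group of $\g(\Pi_s)$, and distinct elements of $\Delta(\Pi_s)^+$ determine distinct reflecting hyperplanes.

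Part~1 is the main difficulty. Since the roots of $\Delta(\Pi_s)\subset\Delta_s$ share a single $\g$-length, hence also a single $\el$-length, simplicity of $\el=\g(\Pi_s)$ is equivalent to $W(\Pi_s)$ acting transitively on $\Delta(\Pi_s)$. Transitivity of $W$ on $\Delta_s=W\cdot\theta_s$ combined with $W=W(\Pi_s)\ltimes W_l$ (Proposition~\ref{semidir}) yields, via the normality of $W_l$ (so that $W_l\cdot(w_s\eta)=w_s\cdot(W_l\eta)$), that $W(\Pi_s)$ acts transitively on the quotient set $\Delta_s/W_l$. I would transfer this to $\Delta(\Pi_s)$ through the natural $W(\Pi_s)$-equivariant map $\Delta(\Pi_s)\hookrightarrow\Delta_s\twoheadrightarrow\Delta_s/W_l$; equivariance together with target-transitivity forces surjectivity automatically. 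The obstacle I expect to be hardest is to show that each fibre, $W_l\cdot\alpha\cap\Delta(\Pi_s)$ for $\alpha\in\Delta(\Pi_s)$, forms a single $W(\Pi_s)$-orbit. This should follow from analysing when a long-root reflection $r_\gamma$ lands $\alpha$ back in $\Delta(\Pi_s)$ — essentially constraining $\alpha-r_\gamma(\alpha)$ to the lattice $\mathbb Z\Pi_s\cap\mathbb Z\Delta_l$ — and then verifying uniformly that every such landing is $W(\Pi_s)$-conjugate to $\alpha$, which in turn yields irreducibility of $\Delta(\Pi_s)$ and hence simplicity of $\el$.
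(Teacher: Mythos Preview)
Your Part~2 is correct and matches the paper: both use that, via diagram~\eqref{diagram}, the $W(\Pi_s)$-action on $\vts^0$ is the reflection representation of the Weyl group of $\el$, hence effective, so a generic point has trivial $W(\Pi_s)$-stabiliser and $G_x=H$.

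For Part~1 your approach is genuinely different from the paper's, and it is incomplete. The paper's argument is two lines: $\vts$ is a simple orthogonal $G$-module, so $\bbk[\vts]^G$ has exactly one invariant of degree~$2$; but the number of linearly independent degree-$2$ invariants in $\bbk[\el]^L$ equals the number of simple factors of $\el$, and since $f$ in diagram~\eqref{diagram} is an isomorphism, $\el$ must be simple. Your route instead attempts to prove $W(\Pi_s)$-transitivity on $\Delta(\Pi_s)$ directly. You correctly reduce this to showing that each fibre $W_l{\cdot}\alpha\cap\Delta(\Pi_s)$ lies in a single $W(\Pi_s)$-orbit, but you do not prove it; the phrase ``should follow from analysing when $r_\gamma$ lands $\alpha$ back in $\Delta(\Pi_s)$'' is a hope, not an argument, and there is no evident uniform mechanism (short of inspecting the four cases) forcing such a landing to be $W(\Pi_s)$-conjugate to $\alpha$. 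The degree-$2$ invariant trick sidesteps this entirely.

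For Part~3 there is a concrete error in your descent step. You claim that whenever $\mu\in\Delta_s^+$ involves a long simple root, one can find a long positive root $\nu$ with $\mu-\nu\in\Delta_s^+$. This already fails in $\GR{G}{2}$: with $\alpha_1$ short and $\alpha_2$ long, take $\mu=2\alpha_1+\alpha_2$. The long positive roots are $\alpha_2$, $3\alpha_1+\alpha_2$, $3\alpha_1+2\alpha_2$, and $\mu$ minus any of them is either $2\alpha_1$ (not a root) or negative. The paper does not try to descend in height. Instead, given $\gamma\in\Delta_s^+\setminus\Delta(\Pi_s)^+$, it first takes any $w\in W$ with $w{\cdot}\gamma\in\Delta(\Pi_s)$, uses the semidirect product $W=W(\Pi_s)\ltimes W_l$ (Proposition~\ref{semidir}) to replace $w$ by an element of $W_l$, and then writes $w=r_{\beta_m}\cdots r_{\beta_1}$ with $\beta_i\in\Delta_l^+$. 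This produces a string of short roots $\gamma=\nu_0,\nu_1,\ldots,\nu_m\in\Delta(\Pi_s)$ with each consecutive difference in $\Delta_l\cup\{0\}$, so Proposition~\ref{konfig}(ii) applies at every step and gives $\eus H_\gamma=\eus H_{w\cdot\gamma}$. Your distinctness argument for $\{\eus H_\mu\}_{\mu\in\Delta(\Pi_s)^+}$ is fine and coincides with the paper's.
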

\begin{proof}
1. As $\vts$ is a simple orthogonal $G$-module, $\bbk[\vts]^G$
has a unique invariant of degree 2. On the other hand, the number of linearly independent
invariants of degree 2 in
$\bbk[\g(\Pi_s)]^{G(\Pi_s)}$ equals the number of simple factors of $\g(\Pi_s)$.
Because the mapping $f$ in \eqref{diagram}
is an isomorphism, $\g(\Pi_s)$ must be simple.

2. Let $G_\ast$ be a generic stabiliser for $(G:\vts)$.
Without loss of generality, assume that $G_\ast\supset H$.
If $G_\ast\ne H$, then the finite group
$W(\Pi_s)\simeq N_G(H)/H$ acts on $\vts^0$ non-effectively.
But we know from diagram~\eqref{diagram} that this is not the case.

3. The hyperplanes $\{\eus H_\mu\}_{\mu\in\Delta(\Pi_s)^+}$ are just the reflecting hyperplanes
for the reflection representation of $W(\Pi_s)$. Therefore they are all different.
Take any $\eus H_\gamma$ with $\gamma\in\Delta^+_s\setminus \Delta(\Pi_s)^+$.
Then there is
a $w\in W$ such  that $w{\cdot}\gamma \in\Delta(\Pi_s)$. In view of 
Proposition~\ref{semidir}(ii), we may assume that $w\in W_l$. 
Write $w=r_{\beta_m}\ldots r_{\beta_1}$, where $\beta_i\in\Delta^+_l$.
Then we get a string of {\sl short\/}
roots $\gamma=\nu_0,\nu_1,\ldots,\nu_m=\mu$ such that $\nu_{i+1}-\nu_i\in\Delta_l$.
By Proposition~\ref{konfig}(ii),  $\eus H_{\nu_i}=\eus H_{\nu_{i-1}}$ for all $i$. Hence 
$\eus H_\gamma=\eus H_{w{\cdot}\gamma}$.
\end{proof}%
\begin{rmk}  
A case-by-case verification shows that for any $\gamma\in
\Delta^+_s\setminus \Delta(\Pi_s)^+$ there is a sole long root $\beta$ such that 
$\gamma-\beta\in \Delta(\Pi_s)$, i.e., there is a string, as above, with $m=1$.

$\bullet$ \ $\g=\spn$. Here $\Delta(\Pi_s)^+=\{\esi_i-\esi_j\mid 1\le i< j\le n\}$,
$\Delta^+_s=\{\esi_i\pm \esi_j\mid 1\le i< j\le n\}$, and
$\Delta_l^+=\{2\esi_i \mid 1\le i\le n\}$. If $\gamma=\esi_k+\esi_l$ $(k<l)$, then
$\esi_k+\esi_l=(\esi_k-\esi_l)+2\esi_l$ is the required decomposition.

$\bullet$ \  $\g=\sono$. Here $\Delta(\Pi_s)^+=\{\esi_n\}$, $\Delta^+_s=
\{\esi_i \mid 1\le i\le n\}$,
and $\Delta_l^+=\{\esi_i\pm \esi_j\mid 1\le i< j\le n\}$. 
If $\gamma=\esi_k$ $(k<n)$, then
$\esi_k=(\esi_k-\esi_n)+\esi_n$ is the required decomposition.

The cases of $\GR{F}{4}$ and $\GR{G}{2}$ are left to the reader.
\end{rmk}

\section{The null-cone and Kostant-Weierstrass section}
\label{sect:null}

\noindent
In this section, we compare invariant-theoretic properties of the representations
$(G:\vts)$ and $(G(\Pi_s):\g(\Pi_s))$.
\begin{df}
The simple Lie algebra $\g(\Pi_s)$ is called the {\it simple reduction\/} of the little 
adjoint representation $(G:\vts)$.
\end{df}

\noindent 
To a great extent, invariant-theoretic properties of $(G:\vts)$ are determined
by its simple reduction. We have already proved that 
$\g(\Pi_s)\md G(\Pi_s)\simeq \vts\md G$, and  further results are presented below. 
To simplify notation, we set $L=G(\Pi_s)$ and $\el=\g(\Pi_s)$. Recall that $\el$ is regarded
as an $L$-submodule of $\vts$.

Let $\N(\vts)$ and $\N(\el)$ denote the null-cones in $\vts$ and $\el$, respectively, 
i.e., $\N(\vts)=\pi^{-1}_G(\pi_G(0))$ and $\N(\el)=\pi_L^{-1}(\pi_L(0))$.
All elements of the null-cone are said to be {\it nilpotent}. 

\begin{thm}   \label{thm:irred}  \leavevmode\par
\begin{itemize}
\item[\sf (i)] \ the variety $\N(\vts)$ is irreducible;
\item[\sf (ii)] \ 
there is $e\in \N(\vts)$ such that $d\pi_G(e)$ is onto;
\item[\sf (iii)] \ the ideal of the variety $\N(\vts)$ in
$\bbk[\vts]$ is generated by the basic $G$-invariants.
\end{itemize}
\end{thm}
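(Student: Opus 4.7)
The plan is to transfer Kostant's classical results \cite{ko63} on the nilpotent variety of a simple Lie algebra from $(L,\el)$ to $(G,\vts)$, exploiting the $L$-equivariant embedding $\el\hookrightarrow\vts$ and the restriction isomorphism $f\colon\el\md L\isom\vts\md G$ of diagram \eqref{diagram}. Fix a principal nilpotent $e\in\el$, lying in a principal $\tri$-triple in $\el$, and let $\Sigma\subset\el$ be the Kostant-Weierstrass section of $\pi_L$ through $e$. Since $f$ is induced by restriction of $G$-invariants to $\el$, one has $\pi_G|_\el=f\circ\pi_L$; composing Kostant's surjection $d\pi_L(e)$ with the isomorphism $df$ gives surjectivity of the restriction of $d\pi_G(e)$ to $T_e\el\subset T_e\vts$, and \emph{a fortiori} of $d\pi_G(e)$ itself. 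This proves (ii), and the same factorisation shows $\pi_G|_\Sigma=f\circ\pi_L|_\Sigma$ is an isomorphism, so $\Sigma$ doubles as a KW section for $(G,\vts)$.

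\smallskip

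For (i) I would invoke Hilbert-Mumford: $y\in\N(\vts)$ iff $\lim_{t\to 0}\lambda(t)\cdot y=0$ for some 1-parameter subgroup $\lambda$ of $G$. After $G$- and Weyl-conjugation, $\lambda$ may be taken dominant in $T$. Since $\cP(\vts)=\Delta_s\cup\{0\}$, every $\mu\in\cP(\vts)$ with $\langle\lambda,\mu\rangle>0$ lies in $\Delta_s^+$, so $y\in G\cdot\vts^+$ with $\vts^+:=\bigoplus_{\mu\in\Delta_s^+}\vts^\mu$; the opposite inclusion is obvious (take $\lambda=2\rho^\vee$). Thus $\N(\vts)=G\cdot\vts^+$ is the image of the irreducible variety $G\times\vts^+$ under the action morphism, hence irreducible. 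For (iii), by Theorem~\ref{sop}, $\bbk[\vts]^G=\bbk[f_1,\ldots,f_r]$ is polynomial of Krull dimension $r=\#\Pi_s$ and $\N(\vts)=V(f_1,\ldots,f_r)$ has codimension $r$; hence $f_1,\ldots,f_r$ form a regular sequence in the Cohen-Macaulay ring $\bbk[\vts]$, so $\bbk[\vts]/(f_1,\ldots,f_r)$ is itself Cohen-Macaulay. Part (ii) yields a smooth point of $\N(\vts)$, and part (i) gives irreducibility, so the scheme is generically reduced; the Cohen-Macaulay property then upgrades this to reduced, whence the ideal $(f_1,\ldots,f_r)$ is radical and coincides with $I(\N(\vts))$.

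\smallskip

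The delicate point is (i): one wants irreducibility conceptually, without comparing $G$-orbits in $\N(\vts)$ with $L$-orbits in $\N(\el)$ (a correspondence the paper defers to a case-by-case analysis). Hilbert-Mumford sidesteps this by using only that all nonzero weights of $\vts$ are roots, which makes the positive-weight part $\vts^+$ into an honest vector subspace of $\vts$ whose $G$-saturation is already the whole null-cone.
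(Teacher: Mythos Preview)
Your proof is correct, and for parts (ii) and (iii) it matches the paper's argument closely: a regular nilpotent $e\in\el$ is pushed through the factorisation $\pi_G|_\el=f\circ\pi_L$, and the scheme-theoretic reducedness in (iii) follows from the complete-intersection plus smooth-point combination in the standard way. (Your KW-section remark is a preview of the paper's next theorem.)

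The genuine difference is in part (i). The paper does \emph{not} use Hilbert--Mumford as its primary argument; instead it proves (i) and (ii) simultaneously by a dimension count. Since $\pi_G$ is equidimensional, every irreducible component $\N_1$ of $\N(\vts)$ has dimension $\dim\vts-\dim\vts^0$; intersecting with the linear subspace $\el\subset\vts$ gives
\[
\dim(\N_1\cap\el)\ge\dim\N_1+\dim\el-\dim\vts=\dim\el-\dim\vts^0=\dim\N(\el),
\]
so each component of $\N(\vts)$ already contains all of $\N(\el)$. But the regular nilpotent $e\in\N(\el)$ is a smooth point of the fibre $\pi_G^{-1}(0)$ (by surjectivity of $d\pi_G(e)$), hence lies in a unique component; this forces $\N(\vts)$ to be irreducible.

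Your Hilbert--Mumford argument is cleaner and uses only the fact that all nonzero weights of $\vts$ are roots, so after making $\lambda$ dominant the condition $\langle\lambda,\mu\rangle>0$ forces $\mu\in\Delta_s^+$; the paper explicitly acknowledges this route in a remark following the theorem. What the paper's approach buys is that it keeps the simple reduction $\el$ at centre stage---every component of $\N(\vts)$ is shown to contain $\N(\el)$, reinforcing the theme that the invariant theory of $\vts$ is governed by $\el$. What your approach buys is independence from that reduction: you never need to know $\N(\vts)\cap\el=\N(\el)$ or to invoke equidimensionality, so (i) stands on its own.
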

\begin{proof}
(i), (ii). It follows from diagram~\eqref{diagram} that $\N(\vts)\cap\el=\N(\el)$. It is also
known that $\N(\el)$ is irreducible and $\dim\N(\el)=\dim\el-\rk\el
=\dim\el-\dim\vts^0$ \cite{ko63}. 
Let $\N_1$ be an irreducible component of
$\N(\vts)$. Then $\dim \N_1=\dim\vts-\dim\vts^0$ and
\[
\dim\N_1\cap\el\ge \dim \N_1 +\dim\el-\dim\vts=\dim\N(\el) \ .
\]
It follows that $\N_1\cap\el=\N(\el)$, i.e., 
each irreducible component of  $\N(\vts)$ contains $\N(\el)$. 
By \cite{ko63}, there is $v\in\N(\el)$ such that $d\pi_L(v)$ is onto. 
It then follows from properties of diagram~\eqref{diagram} that
$d\pi_G(v)$ is onto as well. Hence $v$ is a smooth point of the
fibre $\pi^{-1}_G(\pi_G(0))$. Therefore, $v$ lies in a unique irreducible component
of $\N(\vts)$ and  $\N(\vts)$ is irreducible.

(iii) \ This follows from (i) and (ii) 
(cf. \cite[Lemma~4 on p.\,345]{ko63}).
\end{proof}

\begin{rmk}
a)  Using the Hilbert-Mumford criterion \cite[\S\,5]{VP} and the structure of weights of
$\vts$, one can give another proof of the irreducibility of 
$\N(\vts)$. 
\\  \indent
b)  We have proved that $\pi_G$ is equidimensional and the fibre 
$\pi^{-1}_G(0)=\N(\vts)$ is an irreducible reduced complete intersection. By a standard 
deformation argument, this implies that the same properties hold for all the  fibres
of $\pi_G$. 
\end{rmk}
An affine subspace $\ca$ of a $G$-module $\VV$ is called a {\it Kostant-Weierstrass
section\/} (KW-section, for short), if the restriction of the quotient morphism
$\pi:\VV\to \VV\md G$ to $\ca$ yields an isomorphism
$\pi\vert_\ca:\ca\isom\VV\md G$. See \cite[8.8]{VP} for details on KW-sections. 

\begin{thm}   \label{KW}
The \ $G$-module $\vts$ has a KW-section.
\end{thm}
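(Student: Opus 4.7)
The plan is to transport the classical Kostant section from the simple reduction $\el=\g(\Pi_s)$ to $\vts$ via the embedding $\el\hookrightarrow \vts$ and the diagram~\eqref{diagram}.

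First, I invoke Kostant's original construction for the adjoint representation of the simple Lie algebra $\el$: choose a principal $\tri$-triple $\{e,h,f\}\subset \el$ and set $\ca_\el=e+\el^f$, where $\el^f$ is the centraliser of $f$ in $\el$. By \cite{ko63}, $\ca_\el$ is an affine subspace of $\el$ and the restriction $\pi_L\vert_{\ca_\el}:\ca_\el\isom \el\md L$ is an isomorphism. That is, $\ca_\el$ is a KW-section for the adjoint representation of $L$.

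Next, I regard $\ca_\el$ as a subset of $\vts$ via the $L$-equivariant linear embedding $\el\hookrightarrow \vts$ introduced before diagram~\eqref{diagram}. Since $\el$ is a linear subspace of $\vts$, $\ca_\el$ remains an affine subspace of $\vts$. By the commutativity of diagram~\eqref{diagram}, the restriction of $\pi_G$ to $\el$ factors as $\pi_G\vert_\el = f\circ \pi_L$, where $f:\el\md L\to \vts\md G$ is the morphism appearing in the bottom row of \eqref{diagram}. In Section~\ref{sect:invariants} it was already shown that $f$ is an isomorphism. Consequently,
\[
\pi_G\vert_{\ca_\el}\;=\;f\circ\bigl(\pi_L\vert_{\ca_\el}\bigr)
\]
is a composition of two isomorphisms, hence itself an isomorphism $\ca_\el\isom \vts\md G$. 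This exhibits $\ca_\el$ as a KW-section for $(G:\vts)$.

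There is essentially no hard step here beyond what is already in place; the whole force of the argument lies in the two non-trivial facts I take as given, namely Kostant's theorem for the simple Lie algebra $\el$ and the isomorphism $f:\el\md L\isom \vts\md G$ coming from diagram~\eqref{diagram}. The only point worth double-checking is that $\ca_\el$, which is a priori an affine subspace of $\el$, is truly affine inside the bigger space $\vts$; this is immediate because $\el\subset \vts$ is a linear subspace containing the base point $e\in\el$ and the direction subspace $\el^f\subset \el$.
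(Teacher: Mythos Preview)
Your proof is correct, and it is a genuinely different (and more direct) route than the paper's.

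Both arguments start from a regular nilpotent element $e\in\N(\el)$ and exploit the isomorphism $f:\el\md L\isom \vts\md G$ coming from diagram~\eqref{diagram}. The difference lies in how the KW-section is produced. The paper uses only the \emph{infinitesimal} consequence of that isomorphism: since $d\pi_L(e)$ is onto (Kostant), so is $d\pi_G(e)$; hence $e$ is a smooth point of $\N(\vts)$, and then the general machinery of \cite[Prop.\,4]{R&S0} is invoked --- one picks a semisimple $x\in\g$ with $x{\cdot}e=e$, chooses an $x$-stable complement $U$ to $T_e(\N(\vts))$, and $e+U$ is the desired section. You instead transport Kostant's \emph{explicit} slice $\ca_\el=e+\el^f$ wholesale: the identity $\pi_G\vert_\el=f\circ\pi_L$ immediately gives $\pi_G\vert_{\ca_\el}=f\circ(\pi_L\vert_{\ca_\el})$, a composition of two isomorphisms.

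Your approach is shorter and gives a concrete section sitting inside $\el$, at the cost of importing the full strength of Kostant's slice theorem for $\el$. The paper's approach needs only the surjectivity of $d\pi_L(e)$ (a weaker input) but appeals to the abstract KW-section criterion from \cite{R&S0}; its advantage is that it illustrates a general mechanism applicable beyond this particular embedding.
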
\begin{proof}
Let $e\in\N(\el)$ be an $L$-regular nilpotent element. 
Then $d\pi_L(v)$ is onto, and hence $d\pi_G(v)$ is onto.
Therefore $e$ is a smooth point of $\N(\vts)$.
Since $G{\cdot}e$ is conical, we can find a semisimple element $x\in\g$ such that
$x{\cdot}e=e$. Take an $x$-stable complement to $T_e(\N(\vts))$ in $\vts$. Call it $U$.
Then $e+U$ is a KW-section in $\vts$. A standard argument for the last claim
can be found in \cite[Prop.\,4]{R&S0} 
(see also \cite[8.8]{VP}).
\end{proof}

By Proposition~\ref{simple}(i), $\Delta(\Pi_s)$ is an irreducible (simply-laced)
root system. Therefore the Coxeter number of $\Delta(\Pi_s)$ is well-defined.
Write $h_s$ for this number.

\begin{prop}
Let $c\in W$ be a Coxeter element associated with $\Pi$. Then
$c^{h_s}\in W_l$ and $h/h_s\in \BN$.
\end{prop}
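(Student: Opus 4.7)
My plan is to exploit the semidirect product decomposition $W = W(\Pi_s)\ltimes W_l$ from Proposition~\ref{semidir}(ii). Let $\pi\colon W \twoheadrightarrow W/W_l \simeq W(\Pi_s)$ be the canonical projection. Since $r_\beta \in W_l$ for every $\beta \in \Pi_l$, we have $\pi(r_\beta) = 1$, whereas $\pi(r_\alpha) = r_\alpha$ for every $\alpha \in \Pi_s$. Writing a Coxeter element as a product $c = r_{i_1} r_{i_2}\cdots r_{i_n}$ of all simple reflections in some chosen order, its image $\pi(c)$ is, by the homomorphism property, the product in the induced order of exactly those $r_{i_j}$ with $\alpha_{i_j}\in\Pi_s$.

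The key observation is that this image is itself a Coxeter element of $W(\Pi_s)$. By Proposition~\ref{simple}(1), $\el = \g(\Pi_s)$ is simple, and $W(\Pi_s)$ is its Weyl group, so $\pi(c)$ has order exactly $h_s$. Therefore $\pi(c^{h_s}) = \pi(c)^{h_s} = 1$, which gives the first assertion $c^{h_s} \in \ker\pi = W_l$.

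For the divisibility $h/h_s\in\BN$, I would invoke the elementary fact that, under any group homomorphism, the order of the image divides the order of the source element. Applied to $\pi$ and $c$: the element $c$ has order $h$ in $W$ and its image $\pi(c)$ has order $h_s$ in $W(\Pi_s)$, so $h_s \mid h$. The only point requiring a little care is that $\pi(c)$ is a \emph{bona fide} Coxeter element of $W(\Pi_s)$ — i.e.\ that each short simple reflection appears exactly once in the product — but this is immediate from the fact that $\pi$ is a homomorphism whose kernel contains each long simple reflection individually. I do not foresee any genuine obstacle; the substance of the argument lies in having identified $W(\Pi_s)$ as the Weyl group of a \emph{simple} Lie algebra, which has already been established in Proposition~\ref{simple}(1).
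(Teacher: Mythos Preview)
Your proof is correct and follows essentially the same route as the paper's: both exploit the semidirect product $W \simeq W(\Pi_s)\ltimes W_l$ to project $c$ onto a Coxeter element of $W(\Pi_s)$, whose order is $h_s$. The paper phrases this as a factorisation $c=c_1c_2$ with $c_1\in W(\Pi_s)$, $c_2\in W_l$ and observes that $c^k=(c_1)^k c_2'$ for some $c_2'\in W_l$; your use of the quotient homomorphism $\pi$ is the same argument, stated a bit more explicitly.
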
\begin{proof}
By Proposition~\ref{semidir}, we can write $c=c_1c_2$, 
where $c_1\in W(\Pi_s)$ and $c_2\in W_l$. 
Furthermore, $c_1$ is a Coxeter element of $W(\Pi_s)$, and the semi-direct product structure of $W$ shows that 
$c^k=(c_1)^k c'_2$ for some $c'_2\in W_l$. Taking $k=h_s$ or $h$, we obtain both
assertions.
\end{proof}%
\begin{df}The integer $h/h_s$ is called the {\it transition factor\/}. 
\end{df}

\noindent
By our results for $(G:\vts)$ and well-known properties of simple Lie algebras,
we have
\[\begin{array}{rrr}
\bullet & \dim\vts=(h+1){\cdot}\#(\Pi_s),  &\dim\N(\vts)=h{\cdot}\#(\Pi_s); \\
\bullet & \dim\el=(h_s+1){\cdot}\#(\Pi_s), &\dim\N(\el)=h_s{\cdot}\#(\Pi_s);
\end{array}
\]
It follows that $\dim\N(\vts)/\dim\N(\el)$ equals the transition factor.
Actually, the relationship between these null-cones is much more precise
and mysterious!

\begin{thm}    \label{thm:nil-orb}
Let $\co$ be a nilpotent $L$-orbit in $\el$. 
The mapping $\co\to G{\cdot}\co$ sets up a bijection
between the sets of nilpotent orbits $\N(\el)/L$ and $\N(\vts)/G$.
Moreover, this mapping preserves the closure relation and \ 
$\displaystyle
\frac{\dim(G{\cdot}\co)}{\dim\co}=\frac{h}{h_s}
$ \ \ 
for any nonzero $\co\in \N(\el)/L$. 
\end{thm}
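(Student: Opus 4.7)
The proof proceeds case-by-case through the four little adjoint representations, as announced in the introduction. A preliminary step handles surjectivity of $\co\mapsto G\cdot\co$ and simultaneously confirms the ratio $h/h_s$ on the regular orbit. I would first show $\N(\vts)=G\cdot\N(\el)$. By Theorem~\ref{thm:irred}, $\N(\vts)$ is irreducible, and $\N(\vts)\cap\el=\N(\el)$ was established there. Pick an $\el$-regular nilpotent $e\in\N(\el)$; the argument in the proof of Theorem~\ref{KW} shows that $d\pi_G(e)$ is onto, so $e$ is a smooth point of $\N(\vts)$ and lies in the unique open $G$-orbit $\co^{\mathrm{reg}}\subset\N(\vts)$. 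Thus $\dim G\cdot e=h\cdot\#\Pi_s$ while $\dim L\cdot e=h_s\cdot\#\Pi_s$, confirming the ratio on the regular orbit. From $\N(\vts)=\overline{G\cdot e}\subset G\cdot\N(\el)\subset\N(\vts)$ it follows that every nilpotent $G$-orbit in $\vts$ meets $\el$, giving surjectivity of $\co\mapsto G\cdot\co$.

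\textbf{Case-by-case enumeration.} It remains to check, for each nilpotent $G$-orbit $\co_G\subset\N(\vts)$, that $\co_G\cap\el$ is a single $L$-orbit (giving injectivity), and to verify the dimension ratio on the nonzero non-regular orbits. For $\g=\spn$, $\el$ has type $A_{n-1}$ and $\vts=\VV_{\vp_2}$ is the traceless part of $\Lambda^2\bbk^{2n}$: nilpotent $SL_n$-orbits in $\sln$ are parametrized by partitions of $n$, as are nilpotent $Sp_{2n}$-orbits in $\vts$ via rank-type invariants, with ratio $h/h_s=2$. For $\g=\sono$, $\el\simeq\tri$ and $\vts=\bbk^{2n+1}$ with two orbits on each side, $SO_{2n+1}$ acting transitively on the nonzero null cone of the invariant quadratic form (dimension $2n$), ratio $n$. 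For $\g=\GR{F}{4}$, $\el\simeq\mathfrak{sl}_3$ has three nilpotent orbits of dimensions $0,4,6$, matching three $F_4$-orbits in the $26$-dimensional $\vts$ of dimensions $0,16,24$, ratio $4$. For $\g=\GR{G}{2}$, $\el\simeq\tri$ and $\vts=\bbk^7$ with two orbits on each side of dimensions $(0,2)$ and $(0,6)$, ratio $3$. In every case the $G$-orbit dimensions are distinct, so injectivity reduces to the direct observation, from the explicit parametrizations, that $\co_G\cap\el$ is a single $L$-orbit.

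\textbf{Closure order and main obstacle.} One direction of the closure claim is automatic: $\co\subset\overline{\co'}$ forces $G\cdot\co\subset\overline{G\cdot\co'}$. The converse is verified case by case from the known Hasse diagrams --- dominance on partitions in the classical cases, direct inspection for $\GR{F}{4}$ and $\GR{G}{2}$. The main obstacle is conceptual: the preliminary step accounts for the ratio $h/h_s$ only on the regular orbit, via the KW-section, and there is no apparent structural reason why every nonzero orbit should inflate by exactly this factor. A uniform argument --- perhaps via $\tri$-triples and a comparison of weighted Dynkin data in $\el$ and $\g$, or via a Springer-theoretic description of $\N(\vts)$ as a $G$-saturation of $\N(\el)$ --- would be desirable, but the case-by-case verification chosen by the paper appears essentially forced.
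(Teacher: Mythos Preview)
Your overall approach---a case-by-case verification through the four little adjoint representations, with the $Sp_{2n}$ case as the only substantive one---is exactly what the paper does. The paper's proof is in fact terser than yours: it simply displays the table of cases and cites Sekiguchi \cite{sek84} for the classification of nilpotent $Sp_{2n}$-orbits in $\vts$; your explicit dimension counts and discussion of the closure order go beyond what the paper writes out.

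There is, however, a genuine gap in your preliminary step. You assert
\[
\N(\vts)=\overline{G\cdot e}\subset G\cdot\N(\el),
\]
but the second inclusion is not justified. From $e\in\N(\el)$ you get $G\cdot e\subset G\cdot\N(\el)$, and taking closures only yields $\overline{G\cdot e}\subset\overline{G\cdot\N(\el)}$. Nothing you have said forces $G\cdot\N(\el)$ to be closed; the $G$-saturation of a closed cone need not be closed, and the density of $G\cdot e$ in $\N(\vts)$ says nothing about whether the boundary orbits meet $\el$. So this argument does not establish surjectivity of $\co\mapsto G\cdot\co$.

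This gap is not fatal to your proposal, because your case-by-case enumeration, done properly, establishes surjectivity anyway: once you list all nilpotent $G$-orbits in $\vts$ (from the known classifications) and observe that each one meets $\N(\el)$ in a single $L$-orbit, both injectivity and surjectivity follow. So you should either drop the preliminary claim $\N(\vts)=G\cdot\N(\el)$ as an a priori fact, or state clearly that it is a \emph{consequence} of the case-by-case check rather than an input to it. The paper does not attempt any such preliminary reduction; it goes straight to the classification.
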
\begin{proof}
Unfortunately, the proof relies on an explicit classification of 
orbits in $\N(\vts)$.
(It is would be great to have a conceptual explanation!) The four possibilities  
are gathered in Table~\ref{tab1}.

\begin{table}[h]
\begin{tabular}{c|lccc|ccc|c}
& $\ \g$  &  $\dim\vts$ & $\theta_s$ & $h$   & $\el=\g(\Pi_s)$ &  $h_s$ &  $\#(\N(\el)/L)$ 
& $\tilde\g$ \\ \hline
1\  & $\spn$  & $2n^2{-}n{-}1$ & $\vp_2$   & $2n$ &  $\sln$  & $n$    & $\#\mathsf{Par}(n)$ 
& $\mathfrak{sl}_{2n}$ \\
2\  & $\sono$  & $2n+1$  & $\vp_1$   & $2n$ &  $\tri$                  & $2$   & $2$
& $\mathfrak{so}_{2n+2}$ \\
3\  & $\GR{F}{4}$ & $26$ & $\vp_1$ & $12$ & $\mathfrak{sl}_3$ & $3$ & $3$ & $\GR{E}{6}$ \\
4\  & $\GR{G}{2}$  & $7$ & $\vp_1$ & $6$  &  $\mathfrak{sl}_2$ & $2$ & $2$
& $\mathfrak{so}_{8}$    
\end{tabular}
\vskip1ex
\caption{The little adjoint representations and their simple reductions}  \label{tab1}
\end{table}
\noindent 
The only non-trivial case is the first one. Here $\mathsf{Par}(n)$ stands for the set of all
partitions of $n$, and a classification of the nilpotent $Sp_{2n}$-orbits in
$\vts$ is obtained in \cite[\S\,3.2]{sek84}. 
\end{proof}

\begin{rmk}   \label{rmk:strange-eq}
A case-by-case inspection shows that $h/h_s=h-\mathsf{ht}(\theta_s)=
\mathsf{ht}(\theta)-\mathsf{ht}(\theta_s)+1$. Again, it would be
interesting to have an explanation for this.
\end{rmk}

\begin{rmk}  \label{rmk:pro-invol}
For items 1--3 in Table~\ref{tab1}, the little adjoint representation is the isotropy 
representation of a symmetric space of certain over-group $\tilde G$, i.e., it is related to an
involution of $\tilde\g=\Lie \tilde G$. The algebra $\tilde\g$ is indicated in the last column
of Table~\ref{tab1}. It is interesting to observe that in these cases the restricted root 
system of the symmetric variety $\tilde G/G$ is reduced and of type $\el$ 
(that is, of type $\GR{A}{n-1}$ for item~1, etc.). \quad
Item~4 is related to an automorphism of order $3$ of $\tilde\g=\mathfrak{so}_8$.
Therefore, a classification of nilpotent $G$-orbits in $\vts$ can also be obtained
via a method of Vinberg~\cite{vi79}.
\end{rmk}

\noindent
For an arbitrary $G$-module $\VV$,
set $\eus R_G(\VV)=\{v\in\VV \mid \dim G{\cdot}v \textrm{ is maximal}\}$.
It is a dense open subset of $\VV$. The elements of $\eus R_G(\VV)$ are usually
called {\it regular}.
Consider the quotient morphism
$\pi_{G,\VV}:\VV\to \VV\md G:=\spe\bbk[\VV]^G$. 
Set $\eus S_G(\VV)=\{v\in\VV \mid d\pi_{G,\VV}(v) \textrm{ is onto}\}$.
A classical result of Kostant \cite[Theorem\,0.1]{ko63} asserts that $\eus R_G(\g)=\eus S_G(\g)$.
Another proof  is given in \cite[\S\,1]{R&S0}. 

\begin{prop}   \label{prop:RiS}
We have $\eus R_G(\vts)=\eus S_G(\vts)$.
\end{prop}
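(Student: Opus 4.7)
My plan is to prove the two inclusions $\eus R_G(\vts) \subseteq \eus S_G(\vts)$ and $\eus S_G(\vts) \subseteq \eus R_G(\vts)$ separately, the first being straightforward and the second requiring the KW-section.

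\emph{For $\eus R_G(\vts) \subseteq \eus S_G(\vts)$.}
Given $v\in\eus R_G(\vts)$, the orbit $G{\cdot}v$ has dimension $N:=h{\cdot}\#(\Pi_s)$, which by Theorem~\ref{sop}(v) coincides with the dimension of every fibre of $\pi_G$. By Theorem~\ref{thm:irred}(i) together with the deformation remark immediately following it, the fibre $F:=\pi_G^{-1}(\pi_G(v))$ is an irreducible reduced complete intersection of dimension $N$. Hence $G{\cdot}v$ is open and dense in $F$ and, being a homogeneous variety, smooth; so $v$ lies in the smooth locus of $F$. Since $F$ is cut out in $\vts$ by the basic $G$-invariants (shifted by constants), its smoothness at $v$ amounts to linear independence of the differentials of these invariants at $v$, that is, to the surjectivity of $d\pi_G(v)$.

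\emph{For $\eus S_G(\vts) \subseteq \eus R_G(\vts)$.}
The strategy is to show first that the KW-section $\ca=e+U$ from the proof of Theorem~\ref{KW} is contained in $\eus R_G(\vts)$. Since $e$ is $L$-regular nilpotent, Theorem~\ref{thm:nil-orb} gives $\dim G{\cdot}e=(h/h_s){\cdot}\dim L{\cdot}e=N$, so $e\in\eus R_G(\vts)$. The semisimple element $x\in\g$ with $x{\cdot}e=e$ defines a one-parameter subgroup of $G$ which, combined with an appropriate dilation of $\vts$, yields a $\mathbb G_m$-action on $\vts$ that stabilises $\ca$ and contracts it to $e$ as the parameter tends to $0$. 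Because this action is the composition of a $G$-action with scalar rescaling, it preserves the dimensions of $G$-orbits. Lower semi-continuity of $c\mapsto \dim G{\cdot}c$ together with the contraction then force $\dim G{\cdot}c=N$ for all $c\in\ca$, whence $\ca\subseteq \eus R_G(\vts)$.

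Consequently, each fibre $F_y=\pi_G^{-1}(y)$ contains the regular point $c_y:=\ca\cap F_y$, so the orbit $G{\cdot}c_y$ is open and dense in $F_y$. To finish it remains to show that the smooth locus of $F_y$ coincides with $G{\cdot}c_y$; equivalently, no point of a strictly smaller $G$-orbit is a smooth point of $F_y$. This is the main obstacle: a priori an irreducible complete intersection may have smooth points lying outside its unique open orbit. I would settle it by applying Luna's \'etale slice theorem at a hypothetical smooth non-regular $v\in F_y$: the slice representation of $G_v$ on a complement of $\g{\cdot}v$ in $T_vF_y$ must have a generic stabiliser strictly larger than $H$, and tracing the consequences back through the complete intersection structure of $F_y$ contradicts the smoothness at $v$. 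Alternatively, one can first verify the statement for the null-cone fibre $F_0=\N(\vts)$ by invoking the orbit correspondence of Theorem~\ref{thm:nil-orb} to import Kostant's description of the smooth locus of $\N(\el)$, and then propagate to arbitrary fibres using flatness of $\pi_G$ (which is immediate from miracle flatness since $\vts$ and $\vts\md G$ are both smooth and $\pi_G$ is equidimensional).
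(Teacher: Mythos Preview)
Your argument for $\eus R_G(\vts)\subseteq \eus S_G(\vts)$ is correct and in fact more self-contained than the paper's: the paper simply quotes the Cartan-subspace machinery of \cite{R&S}, while you derive the inclusion directly from the fact (established right after Theorem~\ref{thm:irred}) that every fibre is a reduced irreducible complete intersection, so that a regular point sits on a smooth open orbit of the fibre and hence is a smooth point of the complete intersection.

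The converse, however, is not proved. You correctly isolate the obstacle---that the smooth locus of each fibre might strictly contain the open $G$-orbit---but neither of your two proposed remedies closes the gap. For the first, Luna's \'etale slice theorem is only available at points whose $G$-orbit is \emph{closed}; a non-regular $v$ in a fibre typically has non-closed orbit, so you cannot apply the slice theorem at $v$, and the sentence about the slice having ``a generic stabiliser strictly larger than $H$'' has no content without that. For the second, flatness of $\pi_G$ does not propagate the equality $(\text{smooth locus})=(\text{open orbit})$ from the null-cone to general fibres: semicontinuity of the singular locus runs in the wrong direction (it can only shrink when moving away from $0$), and the orbit bijection of Theorem~\ref{thm:nil-orb} tells you nothing about singularities of $\N(\vts)$---it only matches orbit dimensions.

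The paper handles the converse quite differently. It first checks the equality on the null-cone, $\eus R_G(\vts)\cap\N(\vts)=\eus S_G(\vts)\cap\N(\vts)$, by a case-by-case argument: for items~1--3 of Table~\ref{tab1} this is Sekiguchi's criterion \cite[Theorem~4]{sek84} (the restricted root system of $\tilde G/G$ is reduced), and item~4 is trivial. It then reduces an arbitrary $v$ to the nilpotent case via the generalised Jordan decomposition $v=s+n$, applying Luna's slice theorem at the \emph{semisimple} part $s$ (whose orbit is closed): modulo trivial summands the slice representation at $s$ is again a direct sum of little adjoint representations, and the equivalences $v\in\eus R_G\Leftrightarrow n\in\eus R_{G_s}$, $v\in\eus S_G\Leftrightarrow n\in\eus S_{G_s}$ finish the induction. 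If you want to salvage your approach, this is the missing idea: take the slice at the closed orbit in the closure of $G{\cdot}v$, not at $v$ itself, and then you are forced---as the paper is---into a case analysis on the null-cone.
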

\begin{proof}
1. First, we notice that $\eus R_G(\vts)\subset \eus S_G(\vts)$.
This is a consequence of Theorem~\ref{sop}, Remark~\ref{rem-CSS}(b), 
and \cite[Corollary 1]{R&S}. 
For, the theory developed in \cite{R&S} shows that the required inclusion always 
holds for the representations with a Cartan subspace. 

2. To prove the converse, we first note that 
$\eus R_G(\vts)\cap\N(\vts)= \eus S_G(\vts)\cap\N(\vts)$. For items~1--3 of Table~\ref{tab1},
this follows from \cite[Theorem~4]{sek84}.
Indeed, these items are related to involutions of a group $\tilde G$, and Sekiguchi's theorem
asserts that such an equality holds if and only if the restricted root system of
$\tilde G/G$ is reduced (cf. Remark~\ref{rmk:pro-invol}). The last item of Table~\ref{tab1} is easy.

In order to reduce the problem to nilpotent elements, we use Luna's slice theorem (see 
\cite[\S\,6]{VP}).
If $\ov{G{\cdot}v}\not\ni \{0\}$, then there exists a generalised Jordan decomposition
$v=s+n$, which means that $G{\cdot}s$ is closed ($s\ne 0$) and $\ov{G_{s}{\cdot}n} \ni \{0\}$. Without loss of generality, we may assume that $s\in \vts^0$.
Modulo trivial representations,
the slice representation $(G_s: N_s)$ 
associated with $s$ is the direct sum of little adjoint
representations for the simple components of $G_s$; and $n$ is a nilpotent element in $N_s$. It remains to observe that the slice theorem implies that
$v\in \eus R_G(\vts)\ \Leftrightarrow \  n\in \eus R_{G_s}(N_s)$ and
$v\in \eus S_G(\vts)\ \Leftrightarrow \  n\in \eus S_{G_s}(N_s)$.
\end{proof}%

\begin{rmk}
The null-cone $\N(\vts)$ is an irreducible complete intersection, and it follows from 
Theorem~\ref{thm:nil-orb} that
the complement of the
dense $G$-orbit in $\N(\vts)$ is of codimension $2h/h_s$, which is $\ge 4$. 
Therefore, $\N(\vts)$ is normal. 
Moreover, in this situation, the closure of any nilpotent $G$-orbit is normal!
Again, the only non-trivial case is item~1 in Table~\ref{tab1}.
For this case, the normality of all nilpotent orbit closures is proved in \cite[Theorem\,4]{ohta}.
\end{rmk}

\section{Further properties and remarks}
\label{sect:remarksl}

\noindent 
\subsection{}
There is a rich combinatorial theory for ideals of the Borel subalgebra
$\be=\te\oplus\ut$ in $\ut$, which is mainly due to Cellini and Papi (see e.g.
\cite[Sect.\,2]{short04} and references therein). In particular, there is a nice closed formula 
for the number of such ideals.
This formula has an analogue in the context of the
little adjoint representations.

Consider the $B$-stable space $\vts^+=\oplus_{\mu\in\Delta^+_s}\vts^\mu \subset \vts$.
Then there is a bijection between the $B$-stable subspaces of $\vts^+$ and 
the antichains in the poset $\Delta^+$ that consists of short roots
\cite[Prop.\,4.2]{short04}.
The common cardinality $K$ of these two sets is given as follows.
Let $m_1\le m_2\le\dots \le m_n$ be the exponents of $W$ and $l=\#\Pi_s$. Then
\[
      K=\prod_{i=1}^l \frac{h+m_i+1}{m_i+1} \ .
\]
For items 1--3 in Table~\ref{tab1}, i.e., if $(\theta\vert\theta)/(\theta_s\vert\theta_s)=2$,
there is a slightly different formula:
\[
    K=\prod_{i=1}^n \frac{g+m_i+1}{m_i+1} \ ,
\]
where $g=\#\Delta_s/n$, see \cite[Theorem\,5.5]{short04}.

\subsection{} For a graded $G$-module $\cM=\oplus_i \cM_i$ with $\dim\cM_i< \infty$, the graded character of $\cM$, $\mathsf{ch}_q(\cM)$, 
is the formal sum $\sum_i \mathsf{ch}(\cM_i) q^i \in \boldsymbol{\Lambda}[[q]][q^{-1}]$.
Here $\boldsymbol{\Lambda}$ is the character ring of finite-dimensional representations of
$G$. The graded character of
$\bbk[\N(\g)]$ was determined by Hesselink in 1980 \cite{wim2}. 
A similar formula exists for
$\mathsf{ch}_q(\bbk[\N(\vts)])$. This is a particular instance of the theory of
short Hall-Littlewood polynomials developed in \cite[Sect.\,5]{selecta}.

Let us define a $q$-analogue of a generalised partition function 
$\ov{\eus P}_q(\nu)$ by the expansion
\[
   \prod_{\mu\in\Delta^+_s} \frac{1}{1-qe^\mu}=\sum_{\nu}\ov{\eus P}_q(\nu)e^\nu .
\]
and for $\lb$ dominant, we set
\[
  \ov{\me}_{\lb}^{\mu}(q)=\sum_{w\in W} (-1)^{\ell(w)}\ov{\eus P}_q(w(\lb+\rho)-(\mu+\rho))  .
\]
Then  (see \cite[Prop.~5.6]{selecta})
\[
    \mathsf{ch}_q(\bbk[\N(\vts)])=\sum_{\lb \ \text{ dominant}} \ov{\me}_{\lb}^{0}(q)\,
    \mathsf{ch}\VV_\lb .
\]

\subsection{}
For any orthogonal $G$-module $\VV$, one can define a
subvariety of $\VV\times \VV$, which is called the {\it commuting variety} (of $V$). Namely, 
if $\eus K$ is the Killing form on $\g$ and $<\,,\,>$ is a $G$-invariant symmetric non-degenerate bilinear form on $\VV$, then we
consider the bilinear mapping
\[ 
    \vp: \VV\times \VV\to\g ,
\]  
where $\eus K (\vp (v_1,v_2), s):=<s{\cdot}v_1,v_2>$,
$s\in\g,\,v_1,v_2\in \VV$. 
By definition, $\fe(\VV):=\vp^{-1}(0)_{red}$ is the commuting variety.
One of the first questions is whether $\fe(\VV)$ is  irreducible.

{\bf Example}. If $\VV=\g$, then $\vp=[\ ,\ ]$ and $\fe(\g)$ is the usual commuting variety,
i.e., the set of pairs of commuting elements in $\g$. A classical result
of Richardson \cite{rich} asserts that $\fe(\g)$ is irreducible.
More generally, if $\g=\g_0\oplus\g_1$ is a $\BZ_2$-grading, then 
$\g_1$ is an orthogonal $G_0$-module and 
$\vp:\g_1\times\g_1\to\g_0$ is nothing but the usual Lie bracket.
However, the commuting  variety $\fe(\g_1)$ is not always irreducible \cite{fan}.

\begin{thm}   \label{thm:irr-com}
The commuting variety $\fe(\vts)$ is irreducible.
\end{thm}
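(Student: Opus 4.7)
The plan is to mimic Richardson's classical proof that $\fe(\g)$ is irreducible, using the Cartan subspace $\vts^0$ in place of a Cartan subalgebra.

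First, I would check that $\vts^0 \times \vts^0 \subset \fe(\vts)$. For $v_1, v_2 \in \vts^0$ and $s \in \te$, $s \cdot v_1 = 0$; for $s \in \g^\mu$ with $\mu \ne 0$, $s \cdot v_1 \in \vts^\mu$ is orthogonal to $v_2 \in \vts^0$ since distinct weight spaces of the self-dual $G$-module $\vts$ are orthogonal under any invariant form. Hence $\vp(v_1, v_2) = 0$. Set $Y := \overline{G \cdot (\vts^0 \times \vts^0)} \subset \fe(\vts)$; being the closure of the image of the irreducible variety $G \times \vts^0 \times \vts^0$, $Y$ is irreducible.

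A dimension count gives $\dim Y = \dim G - \dim H + 2\#\Pi_s = \#\Delta_s + 2\#\Pi_s$, using Theorem~\ref{sop}: $H$ fixes $\vts^0$ pointwise, $N_G(\vts^0)/Z_G(\vts^0) \simeq W(\Pi_s)$ is finite, and generic $v \in \vts^0$ has $G_v = H$, so the generic stabiliser of a pair in $\vts^0 \times \vts^0$ is $H$. The first projection $\pi_1 \colon \fe(\vts) \to \vts$ has fibre $(\g \cdot v_1)^\perp$ over $v_1$, of dimension $\#\Pi_s$ for regular $v_1$, whence $\dim \fe(\vts) \ge \dim \vts + \#\Pi_s = \dim Y$. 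The key structural observation is that $(\g \cdot v)^\perp = \vts^0$ for regular $v \in \vts^0$: by the proof of Theorem~\ref{sop}(ii), $\g \cdot v = \bigoplus_{\mu \ne 0} \vts^\mu$, whose orthogonal complement in $\vts$ is $\vts^0$. Consequently, on the dense open subset $U := G \cdot (\vts^0 \cap \vts^{\mathrm{reg}}) \subset \vts$, for $v_1 = g w \in U$ we have $\pi_1^{-1}(v_1) \cap \fe(\vts) = g \cdot \vts^0$, so every pair $(v_1, v_2) \in \pi_1^{-1}(U) \cap \fe(\vts)$ equals $g \cdot (w, w')$ with $(w, w') \in \vts^0 \times \vts^0$. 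Thus $\pi_1^{-1}(U) \cap \fe(\vts)$ is an open subset of $\fe(\vts)$ contained in $Y$ and of dimension $\dim Y$, and its closure is an irreducible component of $\fe(\vts)$ equal to $Y$.

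The main obstacle is ruling out additional irreducible components of $\fe(\vts)$ concentrated over $\vts \setminus U$. I would address this by a Richardson-style deformation: given $(v_1, v_2) \in \fe(\vts)$, choose $\xi \in (\g \cdot v_2)^\perp \cap U$; then $\vp(v_1 + t \xi, v_2) = \vp(v_1, v_2) + t \vp(\xi, v_2) = 0$ for all $t$, so the line $\{(v_1 + t \xi, v_2) : t \in \bbk\}$ lies in $\fe(\vts)$ and meets $\pi_1^{-1}(U) \subset Y$ for generic $t$; letting $t \to 0$ yields $(v_1, v_2) \in Y$. The existence of such a $\xi$ is immediate when $v_2 \in U$, since then $(\g \cdot v_2)^\perp$ is a $G$-translate of $\vts^0$ and contains regular elements. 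The delicate case is $v_2 \in \vts \setminus U$, in particular $v_2 \in \N(\vts)$; here I would invoke the generalised Jordan decomposition provided by Luna's slice theorem (as in the proof of Proposition~\ref{prop:RiS}) to induct on $\dim G$, exploiting that slice representations at semisimple points of $\vts$ are again direct sums of little adjoint modules of smaller simple groups. Setting up this induction carefully, and verifying that $(\g \cdot v_2)^\perp$ contains a regular element even for nilpotent $v_2$ (which I expect follows from the Kostant--Weierstrass section constructed in Theorem~\ref{KW}), is where the real technical work resides.
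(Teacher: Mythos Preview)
Your approach is entirely different from the paper's. The paper gives a case-by-case proof: for the three symmetric-pair cases $(Sp_{2n},\wedge^2_0\VV)$, $(SO_{2n+1},\VV)$, $(\GR{F}{4},\VV_{\vp_1})$ it cites \cite{fan}, and for $\GR{G}{2}$ it observes that the $\GR{G}{2}$-orbits in the $7$-dimensional module coincide with the $SO_7$-orbits, whence the result reduces to the (easy) irreducibility of $\fe(\VV)$ for $(SO(\VV),\VV)$. The author states explicitly that a Richardson-style argument would be preferable but that he can only provide a case-by-case proof, and after the proof he records as a desideratum exactly the kind of conceptual argument you are attempting.

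Your setup through the identification of $Y=\ov{G{\cdot}(\vts^0\times\vts^0)}$ as an irreducible component is correct and is the natural Richardson strategy. The gap is precisely where you locate it: for nilpotent $v_2$ you must show that $(\g{\cdot}v_2)^\perp$ contains an element of $U=G{\cdot}(\vts^0)^{\mathrm{reg}}$, and neither of your proposed tools delivers this. The slice-theorem induction gives no reduction when the semisimple part $s$ is zero, so the base case \emph{is} the nilpotent case. The KW-section of Theorem~\ref{KW} is a single affine space $e+U$ attached to one particular $L$-regular nilpotent $e\in\N(\el)$; it says nothing about $(\g{\cdot}v_2)^\perp$ for an arbitrary nilpotent $v_2$. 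This is not a technicality: the analogous statement (``every element has a regular semisimple element in its centraliser'') is exactly what fails for those symmetric pairs $\g=\g_0\oplus\g_1$ whose commuting variety $\fe(\g_1)$ is reducible, so any uniform proof must exploit some feature specific to little adjoint modules that you have not isolated. In short, you have reconstructed the strategy the author already had in mind and identified the obstruction, but you have not overcome it; the paper resorts to classification for this reason.
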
\begin{proof}
It would be pleasant to have a case-free argument, in the spirit of
Richardson's approach. But we can only provide  
a case-by-case proof, which runs as follows. There are four pairs 
$(G,\vts)$:

 $(Sp(\VV), \wedge^2_0 \VV)$; \ $(SO(\VV), \VV)$, $\dim \VV$ is odd; \ 
    $(\GR{F}{4}, \VV_{\vp_1})$; \ $(\GR{G}{2}, \VV_{\vp_1})$.
\\[.6ex] For the first three cases, the irreducibility is proved in 
\cite{fan}. So, it remains to handle the last one.

The commuting variety of $\VV$ is determined by the tangent spaces to all $G$-orbits in 
$\VV$, since $(x,y)\in \fe(\VV)$ if and only if $y\in (\g{\cdot}x)^\perp$.
It is known that the $\GR{G}{2}$-orbits in the 7-dimensional module
$\VV_{\vp_1}$ are the same as $SO_7$-orbits. But the commuting
variety for $(SO(\VV),\VV)$ is irreducible for any $\VV$.
\end{proof}%

Philosophically, the above proof (as well as any case-by-case proof) 
is not satisfactory. One ought to argue as follows:

Our previous results suggest that invariant-theoretic properties of
$(G:\vts)$ are determined by  properties of its simple reduction
$\el=\g(\Pi_s)$. We also know, after Richardson,
that $\fe(\el)$ is irreducible. Therefore, it is reasonable to suggest that
the irreducibility of $\fe(\vts)$ can be deduced 
from that of $\fe(\el)$. That is, one may try to prove directly that
$\ov{G{\cdot}\fe(\el)}=\fe(\vts)$.

\subsection{}
The theory exposed in this article suggest that (almost) all results for the adjoint representations should have analogues for the little adjoint representations.
Furthermore, the adjoint representations in the simply-laced case and the little adjoint
representations in multiply-laced case can be treated simultaneously, if we agree
that in the simply-laced case all the roots are short (hence $\vts=\g$,\  
$\Pi_s=\Pi$, \ 
$W(\Pi_s)=W$, \ $W_l=\{1\}$, etc.)

\end{document}